\documentclass[11pt]{amsart}
\usepackage{amsmath, amsfonts,amssymb}
\usepackage[usenames]{color} 
\usepackage[all]{xy}     
\usepackage[ top=1in, bottom=1in, left=1.1in, right=1.10in ]{geometry}
\usepackage[colorlinks=true, linkcolor=blue, citecolor=green, urlcolor=cyan, pagebackref, linktocpage=true]{hyperref}
\usepackage{stmaryrd}
\usepackage{pst-plot}
\usepackage{pst-math}
\usepackage{pst-func}
\usepackage{palatino}

\usepackage{color}

\newcommand{\cK}{\mathcal{K}}

\newcommand{\cE}{\mathcal{E}}
\newcommand{\cF}{\mathcal{F}}
\newcommand{\cO}{\mathcal{O}}

\newcommand{\NN}{\mathbb N}  
\newcommand{\ZZ}{\mathbb Z}

\newcommand{\QQ}{\mathbb Q}
\newcommand{\CC}{\mathbb{C}}

 \newcommand{\fm}{\mathfrak{m}}
 \newcommand{\fn}{\mathfrak{n}}
 \newcommand{\fp}{\mathfrak{p}}

\newcommand{\inj}{\hookrightarrow}

\newcommand{\Ext}{\operatorname{Ext}}
\newcommand{\Ass}{\operatorname{Ass}}
\newcommand{\Hom}{\operatorname{Hom}}
\newcommand{\Dim}{\operatorname{dim}}

\newcommand{\End}{\operatorname{End}}
\newcommand{\Ker}{\operatorname{Ker}}
\newcommand{\IM}{\operatorname{Im}}
\newcommand{\Ann}{\operatorname{Ann}}
\newcommand{\InjDim}{\operatorname{inj.dim}}
\newcommand{\Supp}{\operatorname{Supp}}

\newcommand{\Cech}{ \check{\rm{C}}}

\newcommand{\Spec}{\operatorname{Spec}}
\newcommand{\Proj}{\operatorname{Proj}}

\newcommand{\Length}{\operatorname{length}}

\newcommand{\height}{\operatorname{ht}}

\newcommand{\link}{\operatorname{link}}

\DeclareMathOperator{\dualizing}{\omega^{\bullet}_X}
\newcommand{\D}{D}
\newcommand{\E}{E}

\newcommand{\FDer}[1]{\stackrel{#1}{\longrightarrow}}

\newcommand{\surj}{\twoheadrightarrow}

\newcommand{\itLC}[3]{H^{{#1}_{#2}}_{I_{#2}} \cdots H^{{#1}_2}_{I_2} H^{{#1}_1}_{I_1}\left(#3\right)}
\newcommand{\Mult}{e} 
\newcommand{\LyuMixed}[3]{\widetilde{\lambda}_{#1, #2}\left( #3 \right) }
\newcommand{\connarrow}{\ar `[d] `[l] `[llld] `[ld] [lld]}

\renewcommand{\(}{\left(}
\renewcommand{\)}{\right)}

\newcommand{\edit}[1]{#1}

\newtheorem{theorem}{Theorem}[section]

\newtheorem{proposition}[theorem]{Proposition}
\theoremstyle{definition}
\newtheorem{definition}[theorem]{Definition}
\newtheorem{properties}[theorem]{Properties}
\newtheorem{ThmDef}[theorem]{Theorem/Definition}
\newtheorem{example}[theorem]{Example}

\theoremstyle{remark}
\newtheorem{remark}[theorem]{Remark}
\newtheorem{question}[theorem]{Question}
\newtheorem{notation}[theorem]{Notation}
\numberwithin{equation}{section}

\begin{document}
\title{A survey on the Lyubeznik numbers}
\author{Luis N\'u\~nez-Betancourt}  
\address{Department of Mathematics, University of Virginia, Charlottesville, VA 22904, USA.}
\email{lcn8m@virginia.edu}

%    author two information
\author{Emily E.\ Witt}  
\address{Department of Mathematics, University of Minnesota,  Minneapolis, MN 55455, USA.}
\email{ewitt@umn.edu}

%    author three information
\author{Wenliang Zhang}  
\address{Department of Mathematics, University of Nebraska,  Lincoln NE 68588, USA.}
\email{wzhang15@unl.edu}

\subjclass[2010]{13D45, 13N10, 13H99}
%    The 2010 edition of the Mathematics Subject Classification is
%    now available.  If you are citing a classification from the
%    new scheme, use the following input coding instead.
%\subjclass[2010]{Primary }

\thanks{The third author was partially supported by the NSF grant DMS \#1247354/\#1068946.}
\maketitle

%\begin{center}
%\textcolor{red}{{\bf\Large Preliminary version; PLEASE DO NOT CIRCULATE}}
%\end{center}

\begin{abstract}
The Lyubeznik numbers are invariants of a local ring containing a field that capture ring-theoretic properties, but also have numerous connections to geometry and topology.  
We discuss basic properties of these integer-valued invariants, as well as describe some significant results and recent developments (including certain generalizations) in the area.
\end{abstract}

\setcounter{tocdepth}{1}
\tableofcontents

\section{Introduction}
Since the introduction of the Lyubeznik numbers \cite{LyubeznikFinitenessLocalCohomologyModules},  
the study of these invariants of local rings containing a field has grown in several compelling directions. 
One aim of this paper is to present definitions used in the study of the Lyubeznik numbers, along with  
examples and applications of these invariants, to those new to them.
For those familiar with them, we also present recent results on, and generalizations of, the Lyubeznik numbers, 
as well as open problems in the area.

Given a module $M$ over a ring $S$, if $I^\bullet$ is a minimal injective resolution of $M$,
then each $I^i$ is isomorphic to a direct sum of indecomposable injective modules
 $E_S(S/\fp)$, $\fp$ is a prime ideal of $S$;
\emph{i.e.}, injective hulls over $S$ of $S/\fp$.
The number of copies of  $\E_S(S/\fp)$ in $I^i$ is the \emph{$i$-th Bass number of $M$ with respect to $\fp$}, 
denoted $\mu_i(\fp, M)$ and equals $\dim_{S_\fp/\fp S_\fp} \Ext^i_S(S_\fp/\fp S_\fp, M_\fp).$

Huneke and Sharp proved that if $S$ is a regular ring of characteristic $p>0$ and $I$ is an ideal of $S$, then the Bass numbers of the local cohomology modules of the form $H^j_I(S)$, $j\in\NN$, are finite, raising the analogous question in the characteristic zero case  \cite{Huneke-Sharp}.  
Utilizing $D$-module theory, Lyubeznik  proved the same statement for regular local rings of characteristic zero containing a field \cite{LyubeznikFinitenessLocalCohomologyModules}.

Relying on the finiteness of the Bass numbers of local cohomology modules, Lyubeznik introduced a family of integer-valued invariants associated to a local ring containing a field, now called \emph{Lyubeznik numbers}.  They are defined as follows:  
Suppose that  $(R,\fm,K)$ is a local ring admitting a surjection from an $n$-dimensional regular local ring $(S,\mathfrak{n},K)$ containing a field, and let $I$ denote the kernel of the surjection.  Given $i, j \in \NN$, the \emph{Lyubeznik number of $R$ with respect to $i,j \in \NN$}, 
is defined as $\Dim_K \Ext^i_S\left(K,H^{n-j}_I (S)\right)$, and is denoted $\lambda_{i,j}(R)$.  Notably, this invariant depends only on $R, i$, and $j$;
in particular, it is independent of the choice of $S$ and of the surjection.  Moreover, if $R$ is any local ring containing a field, then defining $\lambda_{i,j}(R)$ as $\lambda_{i,j}(\widehat{R})$, \edit{where $\widehat{R}$ denotes the completion of $R$ at $\fm$}, extends the definition.

Lyubeznik numbers are indicators of certain ring-theoretic properties.  For example, let $R$ be a local ring containing a field, and let $d=\dim(R).$ 
Then  $\lambda_{d,d}(R)$ equals the number of connected components of the Hochster-Huneke graph of the completion of the strict 
Henselization of $R$ \cite{LyubeznikSomeLocalCohomologyInvariants, ZhanghighestLyubeznikNumber}.  Consequently, 
%Some relations between the Lyubeznik numbers of $R$ and the algebraic properties of the ring are:
\begin{enumerate} 
\item $\lambda_{d,d}(R)\neq 0$ for $d=\dim(R)$ \cite{LyubeznikFinitenessLocalCohomologyModules};
\item $\lambda_{d,d}(R)=1$ if $R$ is Cohen-Macaulay \cite{KawasakiHighestLyubeznikNumber},  \edit{or even if $R$ satisfies Serre's condition $S_2$ ({\it cf.} Theorem \ref{S2 implies lambda to be 1})}; and
\item $\lambda_{d,d}(R)=1$ if $R$ is analytically normal \cite{LyubeznikFinitenessLocalCohomologyModules}. %\item $\lambda_{d,d}(R)$ equals the number of connected components of the Hochster-Huneke graph of the completion of the strict Henselization of $R$ \cite{LyubeznikSomeLocalCohomologyInvariants, ZhanghighestLyubeznikNumber}.
\end{enumerate}

Strikingly, the Lyubeznik numbers, which are defined in a purely algebraic context, have extensive connections with geometry and topology. 
For example, if $R$ is the local ring of an isolated singularity of a complex space of pure dimension at least two, 
then every Lyubeznik number of $R$ equals the $\CC$-vector space dimension of a certain singular cohomology group \cite{GarciaSabbah}.
Blickle and Bondu give similar connections between Lyubeznik numbers and \'etale cohomology in the positive characteristic setting \cite{B-B}, \edit{which was generalized by Blickle in \cite{BlickleLyuCIS}.} 

The Lyubeznik numbers of a Stanley-Reisner ring 
capture properties of the associated simplicial and combinatorial structure \cite{AM-CharCyclesI,AM-CharCyclesII,AM-GL-Z,MustataSRrings,AMV,Yanagawa}. 
For instance, the Lyubeznik numbers %$\lambda_{i,j}(R)$ 
are a measure of the acyclicity of 
%the $(n-j)$-
certain linear strands of the Alexander dual of the 
simplicial complex associated to the ring.  
%$R.$ 
In addition, there are several algorithms for computing the Lyubeznik numbers of these rings
\cite{AM-CharCyclesI,AMV}.

{%\cb
For rings of characteristic zero, there are algorithms for computing Lyubeznik numbers 
that employ the $D$-module structure of local cohomology modules \cite{WaltherAlgorithm, AlvarezMontanerLeykin}.
In \cite{AM-CharCyclesI,AlvarezMontanerLeykin}, these numbers are computed as multiplicities of the characteristic cycles associated to certain 
local cohomology modules.
}

Due to their wide range of applications, several variants and generalizations of the Lyubeznik numbers have been created.
\edit{Analogous invariants of a simplicial normal Gorenstein semigroup ring modulo a squarefree monomial ideal are proven well defined in \cite{Yanagawa}.}
There is a version for projective varieties over a field of prime characteristic, and it is not known whether the definition can be extended to the equal characteristic zero case \cite{ZhangLyubeznikNumbersProjSchemes}.
In characteristic zero, there is a family of invariants closely related to the Lyubeznik numbers defined in terms of certain characteristic cycle multiplicities from $D$-module theory \cite{AM-NumInv}.
%\edit{Such invariants are also studied by Yanagawa in \cite{Yanagawa} when $R$ is a simplicial normal Gorenstein semigroup ring over a field and $I$ is a monomial ideal.}  
%{There is a recent version for ambient spaces that are Gorenstein but not necessarily regular \cite{WaqasSchenzel}.}
For all local rings containing a field, there is a generalization of the Lyubeznik numbers defined using $D$-modules, with several ideals and a coefficient field as parameters; these invariants can be used to measure singularities in positive characteristic \cite{BlickleIntHomology,NuWiEqual}.
Recently, an alternate version of the Lyubeznik numbers has been defined for, in particular, local rings of mixed characteristic \cite{NunezBWittLyuNumMixed}.

%\subsection{Outline}  
\section{Preliminaries}
\subsection{Local cohomology}\label{PreLocalCohomology}
Before defining the Lyubeznik numbers, we need to introduce \emph{local cohomology modules}.  
We include some results on this topic that will be used as tools later in this survey.  
Please see, among many beautiful references, \cite{BrodmannSharpLocalCohomology,24HoursLocalCohomology,LyuSurveyLC} for more details. 

Let $S$ be a Noetherian ring, and fix $f_1,\ldots,f_\ell\in S$. Consider the $\check{\mbox{C}}$ech-like complex, $\Cech^\bullet(\underline{f};S)$:
$$
0\to S\to \bigoplus_{1 \leq i \leq \ell} S_{f_i}\to\bigoplus_{1 \leq i < j \leq \ell} S_{f_i f_j}\to \cdots \to S_{f_1 \cdots f_\ell} \to 0,
$$
where $\Cech^i(\underline{f};S)=\bigoplus \limits_{1 \leq j_1<\ldots<j_i\leq \ell} S_{f_{j_1}\cdots f_{j_i}}$, and on each summand, the homomorphism 
$\Cech^i(\underline{f};S)\to \Cech^{i+1}(\underline{f};S)$
is a localization map with an appropriate sign. For instance, if $\ell=2$, the complex is

$$
0\to S\to S_{f_1}\oplus S_{f_2}\to S_{f_1 f_2} \to 0,
$$
where
$S\to S_{f_1}\oplus S_{f_2}$ maps $s\mapsto (\frac{s}{1},\frac{s}{1})$ and 
$S_{f_1}\oplus S_{f_2}\to S_{f_1 f_2}$ maps $\left(\frac{s}{f^\alpha_1},\frac{r}{f^\beta_2}\right)\mapsto \frac{s}{f^\alpha_1}-\frac{r}{f_2^\beta}.$
We point out that 
$$\Cech^\bullet(\underline{f};S)=\lim\limits_{\overset{\longrightarrow}{t}} \cK(\underline{f}^t;S),$$
 where $\cK(\underline{f}^t;S)$ is the \emph{Koszul complex} associated to $f^t_1\ldots,f^t_\ell$,
 and the map $\cK(\underline{f}^t;S)\to \cK(\underline{f}^{t+1};S)$
 is induced by $S/(f^t_1,\ldots f^t_\ell)\overset{\cdot f_1\cdots f_\ell}{\longrightarrow} S/(f^{t+1}_1,\ldots f^{t+1}_\ell)$.
 
\begin{definition}[Local cohomology]
\label{defn:cech complex defn}
Let $I = (f_1, \ldots, f_\ell)$ be an ideal of a Noetherian ring $S$, let $M$ be an $S$-module, and fix $i \in \NN$. 
We define the \emph{$i$-th local cohomology of $M$ with support in $I$}, denoted $H^i_I(M)$, as the $i$-th cohomology of the complex $\Cech^\bullet(\underline{f};S)\otimes_S M$; \emph{i.e.},
$$
H^i_I(M):=H^i(\Cech^\bullet(\underline{f};S)\otimes_S M)=
\frac{
\Ker
\left( 
\Cech^i(\underline{f};S) \otimes_S M \to \Cech^{i+1}(\underline{f};S) \otimes_S M
\right)
}
{
\IM\left( \Cech^{i-1}(\underline{f};S) \otimes_S M \to \Cech^{i}(\underline{f};S) \otimes_S M \right)
}
$$
\end{definition}

\begin{remark}
The local cohomology module $H^i_I(M)$ does not depend on the choice of generators for $I$, $f_1,\ldots,f_\ell$.
Moreover, it depends only on the radical of $I$; \emph{i.e.}, $H^i_I(M)=H^i_{\sqrt{I}}(M)$.
For this reason, if $X=\Spec{S}$ is the affine scheme associated to $S$ and $Z=\mathbb{V}(I)=\{P\in X\mid I\subseteq P\}$ is the Zariski closed
subset defined by $I,$ then $H^i_I(M)$ can be written as $H^i_Z(M).$ 
\end{remark}

Note that there are several ways to define local cohomology. 
In fact, the definition we chose is not the most natural, but will be advantageous in our discussion due to the interactions between the cited complex $\Cech^\bullet(\underline{f};S)$ and $D$-modules (see Section \ref{Sec D-modules}). 
The local cohomology module $H^i_I(M)$ can also be defined as the $i$-th right derived functor of $\Gamma_I(-)$, where $\Gamma_I(M)=\{v\in M \mid I^j v=0 \hbox{ for some }j\in\NN\}$.  It can also be defined as 
the direct limit $\lim\limits_{\overset{\longrightarrow}{t}} \Ext^i_S(S/I^t,M)$, whose maps are induced by the natural surjections $S/I^{t+1} \twoheadrightarrow S/I^t$. 
%
%\begin{align*}
%H^i_I(M)&=\lim\limits_{t} \Ext^i_S(S/I^t,M)\\
%&=\lim\limits_{t} H^i(\cK(\underline{f^t};S)\otimes_S M)\\
%&= R^i\Gamma_I(M),\\
%\end{align*}
%where $R^i\Gamma_I(-)$ denotes the $i$-th right derived functor of $\Gamma_I(N)=\{v\in N\mid I^j v=0 \hbox{for some }j\in\NN\}$ (

\edit{Next, we collect some basic facts on local cohomology modules that will be used later.}
\begin{remark}[Graded Local Duality]  \label{LocalDuality}
\edit{Let $K$ be a field, and consider $R=K[x_0,\dots,x_n]$ under the standard grading; \emph{i.e.}, $\deg(x_i)=1$ for $0 \leq i \leq n$. 
If $f$ is a homogeneous polynomial in $R$ and $M$ is a graded $R$-module, then $M_f$ is naturally a graded $R$-module. 
Hence, it follows from Definition \ref{defn:cech complex defn} that each local cohomology module $H^j_I(M)$ is naturally graded whenever $M$ is graded and $I$ is homogeneous.
If $\fm = (x_0, \ldots, x_n)$ is the homogeneous maximal ideal of $R$, then for each finitely generated graded $R$-module $M$, there is a functorial isomorphism}
$$H^t_{\fm}(M)_\ell\cong \Hom_K\left(\Ext^{n+1-t}_R(M,R(-n-1))_{-\ell},K\right)$$
for all integers $t$ and $\ell$ (\emph{cf.} \cite[13.4.6]{BrodmannSharpLocalCohomology}).

For any graded $R$-module $M$, the graded Matlis dual $\D(M)$ of $M$ is defined to be the graded $R$-module defined by $\D(M)_\ell=\Hom_K(M_{-\ell},K)$.   Graded Local Duality states that there is a degree-preserving isomorphism
\[H^t_{\fm}(M)\cong \D\left(\Ext^{n+1-t}_R(M,R(-n-1))\right)\]
for all integers $t$ and all finitely generated graded $R$-modules $M$.  See \cite[13.3 and 13.4]{BrodmannSharpLocalCohomology} for details.
\end{remark}

\begin{remark}[Connection between local cohomology and sheaf cohomology] \label{SheafCohomology}
\edit{Take $R$ as in Remark \ref{LocalDuality}.} Let $M$ be a finitely generated graded $R$-module, and let $\widetilde{M}$ be the sheaf on $\mathbb{P}^n$ associated to $M$. Then there are a functorial isomorphisms (\emph{cf.} \cite[A4.1]{EisenbudCommutativeAlgebra}, 
\cite[Lecture 13]{24HoursLocalCohomology}) 
$$H^t_{\fm}(M)\cong \bigoplus_{\ell\in \mathbb{Z}}H^{t-1}(\mathbb{P}^n,\widetilde{M}(\ell))\ {\rm when}\ t\geq 2,$$
and an exact sequence (functorial in $M$) of degree-preserving maps
$$0\to H^0_{\fm}(M)\to M\to \bigoplus_{\ell\in \mathbb{Z}}H^0(\mathbb{P}^n,\widetilde{M}(\ell))\to H^1_{\fm}(M)\to 0.$$
\end{remark}

\begin{remark}[Mayer-Vietoris sequence for local cohomology]  \label{MVSequence}
\edit{Given} ideals $I$ and $J$ of a Noetherian ring $R$, and an $R$-module $M$, there exists a long exact sequence
\[ \xymatrix@R=.70cm@C=.35cm{ \ar[r] 
0 \ar[r] & H^{0}_{I+J}(M) \ar[r] & H^{0}_I(M) \oplus H^{0}_J(M) \ar[r] &H^0_{I \cap J}(M) \connarrow & \\ 
& H^1_{I+J}(M) \ar[r] & H^1_I(M) \oplus H^{1}_J(M) \ar[r] & H^1_{I \cap J}(M) \ar[r] & \cdots,
} \]
functorial in $M$.
\end{remark}

\begin{remark}
The local cohomology modules $H^i_I(M)$ are usually not finitely generated, even when $M$ is. For instance, if $(S,\fm,K)$ is an $n$-dimensional regular local ring, then
$H^n_\fm(S)\cong E_S(K)$, the injective hull of $K$ over $S$, which is not finitely generated unless $S$ is a field.  
\end{remark}
\subsection{$D$-modules}\label{Sec D-modules}
Let $R$ be a Noetherian ring, and
let $S=R \llbracket x_1,\ldots,x_n \rrbracket $.  % be the power series ring with variables $x_1,\ldots,x_n$ and coefficients over $R.$
Let $D(S,R)$ denote the ring of $R$-linear differential operators of $S$,  the subring of $\End_R(S)$ given by
$$
D(S,R)=S\left<\frac{1}{t!}\frac{\partial^t}{\partial x^t_1},\ldots,\frac{1}{t!}\frac{\partial^t}{\partial x^t_n}\right>_{t\in\NN},
$$
where $\frac{1}{t!} \frac{\partial^t}{\partial x^t_i}$ is the $R \llbracket x_1,\ldots,x_{i-1},x_{i+1},\ldots,x_n \rrbracket $-linear endomorphism of $S$ induced by
$$
\frac{1}{t!}\frac{\partial^t}{\partial x^t_i} \cdot x^v_j= \begin{cases}
0 & \textrm{if } t>v, \textrm{and} \\
\binom{v}{t} \ x^{v-t}& \textrm{otherwise}.
\end{cases}
$$
For example, when $t=1$, $\frac{\partial^t}{\partial x^t_i} f = \frac{\partial}{\partial x_i} f$ is the formal derivative of $f$ with respect to $x_i.$

Since $D(S,R)\subseteq \End_R(S)$, $\theta \in D(S,R)$ acts on $s \in S$ by $\theta\cdot s=\theta(s).$ In addition, $D(S,R)$ also acts naturally on $S_f$
for every $f\in S$, and with this action, the localization map $S\to S_f$ is a homomorphism of $D(S,R)$-modules 
(see \cite{LyubeznikFinitenessLocalCohomologyModules,LyubeznikFreeChar} for details). 
If $t=1,$ this action is given by the quotient rule for derivatives.
As a consequence,
the local cohomology modules are $D(S,R)$-modules. 
Moreover, for all $i_1,\ldots,i_\ell \in \NN$ and all ideals $I_1,\ldots,I_\ell$ of $S$, the iterated local cohomology modules $H^{i_\ell}_{I_\ell}\cdots H^{i_2}_{I_2}  H^{i_1}_{I_1} (S)$
are also $D(S,R)$-modules  \cite{LyubeznikFinitenessLocalCohomologyModules}.

\edit{When $R=K$ is a field of characteristic zero, it was proved in \cite{BjorkRingsDifferentialOperators} that each localization $S_f$ has finite length as a $D(S,K)$-module.
Consequently, each local cohomology module of the form $H^{i_\ell}_{I_\ell}\cdots H^{i_2}_{I_2} H^{i_1}_{I_1} (S)$, where 
$i_1,\ldots,i_\ell \in \NN$ and $I_1,\ldots,I_\ell$ are ideals of $S$,
 also has finite length as a $D(S,K)$-module. 
In \cite{LyubeznikFinitenessLocalCohomologyModules}, Lyubeznik used this fact to show that the Bass numbers $\dim_K \Ext^i_S(K,H^j_I(S))$ are finite for all $i,j\in \NN$ and all ideals $I\subseteq S$. Subsequently, the generalized these results to fields of positive characteristic in \cite{LyubeznikFModulesApplicationsToLocalCohomology,LyubeznikFreeChar}.}

\subsection{A key functor}\label{Key Functor}
Here, we introduce a functor motivated by work of Kashiwara and 
used by Lyubeznik to prove that the Lyubeznik numbers are well defined
\cite{LyubeznikFinitenessLocalCohomologyModules}. 
Further properties of this functor were developed by the first two authors to define the generalized  Lyubeznik numbers \cite{NuWiEqual}.
Let $K$ be a field, let $R=K \llbracket x_1,\ldots, x_n \rrbracket$, and let $S=R \llbracket x_{n+1} \rrbracket$.
Let $G$ denote the functor from the category of $R$-modules to that of $S$-modules given by, for an $R$-module $M$, 
%:R\operatorname{-mod}\to S\operatorname{-mod}$ be the functor given by 
\[G(M)= M \otimes_R S_{x_{n+1}}/S. \] 
This functor satisfies the following properties; we refer to \cite[Section 3]{NuWiEqual} for details.
{\edit{
\begin{properties} \label{FunctorProperties} With $G$ the functor defined above, the following hold.
\begin{enumerate}
\item[(1)] $G$ is an equivalence of categories from the category of $D(R,K)$-modules to that of $D(S,K)$-modules supported 
at $\mathbb{V}(x_{n+1}S)$, the Zariski closed set given by $x_{n+1}$, with inverse functor $N \mapsto \Ann_N (xS)$;
\item[(2)] $M$ is a finitely generated $R$-module if and only if $G(M)$ is a finitely generated $D(S,\edit{K})$-module;
%\item[(3)] $\Length_{D(R,K)}(M)=\Length_{D(S,K)}(G(M))$,
\item[(3)] $M$ is an injective $R$-module if and only if $G(M)$ is an injective $S$-module;
\item[(4)] $\Ass_S G(M)=\{(P,x_{n+1})S\mid P\in \Ass_R M\}$;
\item[(5)] $G\left(H^j_I(M)\right)=H^{j+1}_{(I,x)S}(M\otimes_R S)$
for every ideal $I\subseteq R$ and every $j\in \NN$;
\item[(6)] $G\left(\itLC{j}{s}{M}\right) \cong
H^{j_s}_{(I_s,x_{n+1})S} \cdots H^{j_2}_{(I_2,x_{n+1})S}H^{j_1+1}_{(I_1,x_{n+1})S}(M\otimes_R S)$
for all ideals $I_i$ of $R$ and all $j_i \in \NN$, $1\leq i \leq s$; and
\item[(7)] $\Ext^{i}_S (M,G(N))=\Ext^{i}_R (M,N)$
for all $R$-modules $M$ and $N$, and all $i, j \in \NN$.
\end{enumerate}
Moreover, this functor preserves certain properties over rings of differential operators. Namely,
\begin{enumerate}
%\item[($1'$)] $G:\cC\to\cD$ given by $G(M) = M \otimes_R S_{x_{n+1}}/S$ is an equivalence of categories with inverse $\widetilde{G}:\cD\to \cC$, where $F(N)=\Ann_N (xS);$
\item[($1'$)] $M$ is a finitely generated $D(R,K)$-module if and only if $G(M)$ is a finitely generated $D(S,K)$-module, and
\item[($2'$)] $\Length_{D(R,K)}M=\Length_{D(S,K)} G(M).$
\end{enumerate}
\end{properties}
}}

\section{Definition and first properties}\label{Definition}
Referring to local cohomology modules, which were reviewed in Section \ref{PreLocalCohomology},
we may now define the Lyubeznik numbers
\cite{LyubeznikFinitenessLocalCohomologyModules}.

\begin{ThmDef}[Lyubeznik numbers] \label{LyuDef}
Let $(R, \fm, K)$ be a local ring admitting a surjection from an $n$-dimensional regular local ring $(S,\eta,K)$ 
containing a field, $\pi:S\surj R$.  Let $I = \ker (\pi).$
The \emph{Lyubeznik number of $R$ with respect to $i,j \in \NN$} is defined as
$$
\lambda_{i,j}(R):=\Dim_K \Ext^i_S\left(K,H^{n-j}_I (S)\right),
$$
and depends only on $R$, $i$, and $j$; \emph{i.e.}, this number is independent of the choice of $S$ and of $\pi$.  
If $(R, \fm, K)$ is \emph{any} local ring containing a field, then $\lambda_{i,j} (R) := \lambda_{i,j} (\widehat{R}),$ where $\widehat{R}$ is the completion of $R$ with respect to $\fm$. 
\end{ThmDef}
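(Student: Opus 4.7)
The plan is to leverage the functor $G$ from Properties~\ref{FunctorProperties} to reduce well-definedness to a single ``add one variable'' step, and then to interpolate between any two presentations of $R$ by iterating it.

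First I would pass to completions. Given two surjections $\pi_i:S_i\surj R$ with $S_i$ regular local of dimension $n_i$ containing a field, faithful flatness of completion identifies each Bass number $\Dim_K\Ext^i_{S_i}\!\bigl(K,H^{n_i-j}_{\ker\pi_i}(S_i)\bigr)$ with its analogue over $\widehat{S_i}$, so I may assume every $S_i$ is complete with residue field $K$; Cohen's structure theorem then gives $S_i\cong K\llbracket x^{(i)}_1,\ldots,x^{(i)}_{n_i}\rrbracket$. Next I would build the common refinement $T:=K\llbracket x^{(1)}_1,\ldots,x^{(1)}_{n_1},x^{(2)}_1,\ldots,x^{(2)}_{n_2}\rrbracket$ together with the surjection $\pi_T:T\surj\widehat{R}$ defined by $x^{(i)}_j\mapsto\widehat{\pi_i}(x^{(i)}_j)$. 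Choosing lifts $u_j\in S_1$ of $\widehat{\pi_2}(x^{(2)}_j)$ along $\widehat{\pi_1}$, the map $\alpha_1:T\surj S_1$ that is the identity on $x^{(1)}_i$ and sends $x^{(2)}_j\mapsto u_j$ satisfies $\widehat{\pi_1}\circ\alpha_1=\pi_T$, and the change of coordinates $z_j:=x^{(2)}_j-u_j$ realizes $T=S_1\llbracket z_1,\ldots,z_{n_2}\rrbracket$. A symmetric construction exhibits $T$ as a power series ring over $S_2$ with a compatible $\alpha_2$. Hence, by induction on the number of adjoined variables, it suffices to show that $\lambda_{i,j}$ is unchanged when $(S,I)$ is replaced by $(S'=S\llbracket y\rrbracket,\,I'=IS'+yS')$.

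For this one-variable step, Property~(5) of $G$, with the property's ``$R$'' and ``$S$'' read as our $S$ and $S'$, gives
\[ G\!\left(H^{n-j}_I(S)\right)\;\cong\;H^{n-j+1}_{I'}(S'\otimes_S S)\;\cong\;H^{n'-j}_{I'}(S'),\qquad n':=n+1, \]
and Property~(7), applied with the common residue field $M=K$ and $N=H^{n-j}_I(S)$, yields
\[ \Ext^i_{S'}\!\left(K,\,H^{n'-j}_{I'}(S')\right)\;=\;\Ext^i_{S'}\!\left(K,\,G(H^{n-j}_I(S))\right)\;=\;\Ext^i_S\!\left(K,\,H^{n-j}_I(S)\right). \]
Taking $K$-dimensions shows that the two candidate values of $\lambda_{i,j}(R)$ coincide. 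The main obstacle is the bookkeeping of the refinement step---arranging the lifts so that $T$ sits simultaneously as a power series extension of each $S_i$ with $\pi_T=\widehat{\pi_i}\circ\alpha_i$, which is what legitimizes iterating the one-variable isomorphism along the chain $S_1\twoheadleftarrow T\twoheadrightarrow S_2$. Once that scaffolding is in place, Properties~(5) and~(7) close the argument mechanically, and the extension to arbitrary (non-complete) $R$ is built into the definition via $\widehat{R}$.
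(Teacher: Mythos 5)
Your proposal is correct and follows essentially the same route as the paper: pass to completions, form the common refinement $T=S''$, and apply Properties~\ref{FunctorProperties}~(5) and~(7) of the functor $G$ to equate the Bass numbers computed over each presentation with those computed over the refinement. The coordinate-change bookkeeping you spell out (lifting the images of the extra variables and replacing them by $z_j=x^{(2)}_j-u_j$ so that $T$ becomes a power series ring over each $S_i$ compatibly with the surjections) is exactly the scaffolding the paper's terse sketch suppresses here but carries out explicitly in its proof of the generalized Lyubeznik numbers.
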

\begin{proof}[Sketch of proof]
We have that $\dim_K\Ext^{i}_S(K,H^{n-j}_{I} (S))$ is finite \cite{LyubeznikFinitenessLocalCohomologyModules,Huneke-Sharp}, 
so it remains to prove that these numbers are well defined.
Let $\pi' : S'\surj R$ be another surjection, where $S'$ is a regular local ring of dimension $n'$ that contains a field.
Set $I' = \Ker(\pi')$, and let $\fm'$ be the maximal ideal of $S'$.
Since the Bass numbers with respect to the maximal ideal are not affected by completion, we may assume that
$R$, $S$, and $S'$ are complete, so that we can take $S =  K \llbracket x_1,\ldots,x_{n} \rrbracket $
 and $S'= K \llbracket y_1,\ldots,y_{n'} \rrbracket $. 
Let $S''=K \llbracket z_1,\ldots,z_{n+n'} \rrbracket $, and 
let $\pi'': S''\surj R$ be the surjective map defined by $\pi''(z_j)=\pi(x_j)$ for $1\leq j\leq n$
and $\pi''(z_j)=\pi'(y_{j-n})$ for $n\leq j\leq n+n'$.
Let $I''$ be the preimage of $I$ under $\pi''$, respectively. 
By using Properties  \ref{FunctorProperties} {\edit{($5$) and ($7$),}}
we obtain:
\begin{align*}
\dim_K\Ext^{i}_{S''} (K,H^{n+n'-j}_{I''}  (S''))&=\dim_K\Ext^{i}_S (K,H^{n-j}_{I} (S)) \text{ and} \\
\dim_K\Ext^{i}_{S''} (K,H^{n+n'-j}_{I''}  (S''))&=\dim_K\Ext^{i}_{S'} (K,H^{n'-j}_{I'} (S')),
\end{align*}
and we are done.
\end{proof}

Some basic properties of the Lyubeznik numbers are the following \cite{LyubeznikFinitenessLocalCohomologyModules}.

\begin{properties} \label{FirstProp}
% Let $(R, \fm, K)$ be a local ring of characteristic $p>0$ and $d=\dim(R)$. Some of the properties that the Lyubeznik satisfy are \cite{LyubeznikFinitenessLocalCohomologyModules}:
If $(R, \fm, K)$ is a $d$-dimensional local ring containing a field, then the following hold.  
\begin{enumerate}
\item $\lambda_{i,j} (R)=0 $ if either $j>d$ or $i>j$,
%\item $\lambda_{i,j} (R)=0 $ for $j>d$
%\item[\rm{(ii)}] $\lambda_{i,j} (R)=0 $ for $i>j+1$, and
\item $\lambda_{d,d}(R)\neq 0$, and
\item $\lambda_{d,d}(R)=1$ if $R$ is analytically normal.
%, and 
%\item If $R$ is a complete intersection ring, $\lambda_{i,j}(R)=1$ if $i=j=d$, and vanishes otherwise.
%\item $\lambda_{i,j} (R)=0 $ for $j>d$
%\item[\rm{(ii)}] $\lambda_{i,j} (R)=0 $ for $i>j+1$, and
%\item[\rm{(iii)}] $\lambda_{d,d}(R)\neq 0$.
%\item[\rm{(iv)}] If $R$ is analytically normal, then $\lambda_{d,d}=1.$
%\item[\rm{(v)}] If $R$ is a complete intersection, $\lambda_{d,d}(R)=1$ and all other vanish.
\end{enumerate}
\end{properties}
\noindent Due to Property \ref{FirstProp} (1), we can record all nonzero Lyubeznik numbers in the following matrix.
\begin{definition}[Lyubeznik table] \label{LyubeznikTable}
The \emph{Lyubeznik table of $R$} is defined as the following $(d+1) \times (d+1)$ matrix:
\[ 
{\Lambda}(R) := \left( \lambda_{i,j}(R) \right)_{0 \leq i, j \leq d} = \begin{pmatrix}
\lambda_{0,0}(R) &\lambda_{0,1}(R) & \cdots  & \lambda_{0,d-2}(R) & \lambda_{0,d-1}(R) & \lambda_{0,d}(R) \\
0 & \lambda_{1,1}(R) &  \cdots & \lambda_{1,d-2}(R)  & \lambda_{1,d-1}(R)  & \lambda_{1,d}(R) \\
0 & 0 & \ddots  & \vdots  &  \vdots & \vdots \\
\vdots & \vdots &  \ddots & \lambda_{d-2,d-2}(R) & \lambda_{d-2,d-1}(R)  & \lambda_{d-2,d}(R) \\
0 & 0 &  \cdots & 0 & \lambda_{d-1,d-1}(R)  & \lambda_{d-1,d}(R) \\
0 & 0 &  \cdots & 0 & 0 & \lambda_{d,d}(R) \\
\end{pmatrix}.
\] \end{definition}
\noindent Note that all entries below the diagonal are zero by Properties \ref{FirstProp} (1).  

{%\cb
There are algorithms to compute Lyubeznik numbers in certain cases %for polynomials 
in characteristic zero, which rely on the $D$-module structure of local cohomology modules
\cite{WaltherAlgorithm,AlvarezMontanerLeykin}. }
\section{The highest Lyubeznik number}
Some basic properties of the Lyubeznik numbers are that, for a $d$-dimensional local ring $R$ that contains a field,  $\lambda_{d,d}(R)\neq 0$, and
$\lambda_{i,j}(R)=0$ if either $i> d$ or $j> d$. 
(see Properties \ref{FirstProp}).

Hence, the following definition is natural.
\begin{definition}[Highest Lyubeznik number] \label{Highest}
Given a $d$-dimensional local ring $R$ that contains a field, $\lambda_{d,d}(R) = 1$ is called the \emph{highest Lyubeznik number of $R$}.
\end{definition}

Consider the following simple example.
\begin{example} \label{CI}
\edit{Let $R$ be a $d$-dimensional complete local ring containing a field. Suppose that $R$ can be written as $S/I$ where $(S,\mathfrak{m})$ is a regular local ring that contains field and $I$ is an ideal of $S$ such that 
\begin{equation}
\label{defn:CCI}
H^j_I(S)\neq 0\ \text{if and only if}\ j=\height(I).
\end{equation} 
One can see that the spectral sequence $E^{p,q}_2 =H^p_{\mathfrak{m}}(H^q_I(S)) \underset{p}{\Rightarrow} H^{p+q}_{\mathfrak{m}}(S) = E^{p,q}_\infty$ degenerates at the $E_2$-page and, hence, we have that
\begin{equation}\label{Trivial LT}
\lambda_{i,j}(R)=\begin{cases}1& \text{if } i=j=d, \text{and} \\ 0& {\rm otherwise}. \end{cases}
\end{equation}
}
\end{example}

Ideals $I$ that satisfy (\ref{defn:CCI}) are called {\it cohomologically complete intersection ideals} \cite{HellusSchenzel}. 
Examples of cohomologically complete intersection ideals include ideals generated by regular sequences, and Cohen-Macaulay ideals in positive characteristic. 

We say that a ring has \emph{a trivial Lyubeznik table} if the entries of its Lyubeznik table follow the formula given in \eqref{Trivial LT}.  There are rings that are not cohomologically complete intersection and still have a trivial Lyubeznik table, {\it e.g.} sequentially Cohen-Macaulay rings in prime characteristic \cite{AM-SCM}.

Example \ref{CI} indicates that the highest Lyubeznik number, $\lambda_{d,d}(R)$, stands out in that it is never zero, but all other Lyubeznik numbers may vanish. {\edit{Our goal of this section is to give a 
%topological 
characterization of $\lambda_{d,d}(R)$.}} To this end, we will consider the Hochster-Huneke graph of a local ring \cite[Definition 3.4]{HochsterHunekeIndecomposable}.

\begin{definition}[Hochster-Huneke graph]
Let $B$ be a local ring. The \emph{Hochster-Huneke graph $\Gamma_B$ of $B$} is defined as follows:  its vertices are the top-dimensional minimal prime ideals of $B$, and two distinct
 vertices $P$ and $Q$ are joined by an edge if and only if $\height_B(P+Q)=1$. 
\end{definition}  

Before we can state the main result in this section, we want to point out that, since $\lambda_{i,j}(R)$ does not change under any faithfully flat extension, one {\it cannot} use topological information about $\Spec(R)$ to characterize $\lambda_{d,d}(R)$. It turns out  that one must consider a faithfully flat extension $B$ of $R$ that is complete and has a separably closed residue field ({\it{a fortiori}} strictly Henselian); more specifically, one may choose $B=\widehat{R^{sh}}$, the completion of the strict Henselization of $R$.   
(As pointed out in \cite{LyubeznikSomeLocalCohomologyInvariants}, the graph $\Gamma_B$ with $B=\widehat{R^{sh}}$ can be realized by a substantially smaller ring.
Indeed, if $k$ is a coefficient field of $\widehat{R}$, then there exists a finite separable extension $K$ of $k$ such that the graphs $\Gamma_B$ and $\Gamma_{\widehat{R}\otimes_kK}$ are isomorphic. Since this finite separable extension $K$ of $k$ is not explicitly constructed, we choose to work with $B=\widehat{R^{sh}}$.)

To get a sense of the difference between the graph of $R$ and that of $B=\widehat{R^{sh}}$, we consider the following example.
\begin{example}
Let $S=\mathbb{Q}[X,Y,Z,W]_{(X,Y,Z,W)}$, define \[I=(Z^2-3X^2,W^2-3Y^2,ZW-3XY,XW-3YZ),\] and let $R=S/I$. 
Let $x,y,z$, and $w$ denote the images of $X,Y,Z,$ and $W$ in $R$, respectively. One can verify that 
$R\cong \mathbb{Q}\left[x,y,\sqrt{3}x,\sqrt{3}y\right]_{(x,y,\sqrt{3}x,\sqrt{3}y)}$; hence, $R$ is a domain. 
Therefore, the Hochster-Huneke graph of $R$ consists solely of one vertex, so that $\Gamma_R$ has only one connected component.\par
On the other hand, $B=\widehat{R^{sh}}=\frac{\mathbb{Q}^{sep} \llbracket X,Y,Z,W \rrbracket }{(Z-\sqrt{3}X,W-\sqrt{3}Y)(Z+\sqrt{3}X,W+\sqrt{3}Y)}$. Clearly, there are 2 vertices in the graph of 
 $B$ which correspond to the minimal prime ideals $(Z-\sqrt{3}X,W-\sqrt{3}Y)$ and $(Z+\sqrt{3}X,W+\sqrt{3}Y)$, respectively, and the height of the sum of these ideals is two in $B$. Therefore, 
$\Gamma_B$ consists of two vertices and no edges, and $\Gamma_B$ has two connected components.

It is not hard to check that $\lambda_{2,2}(R)=\lambda_{2,2}(B)=2$.
\end{example}

Now we can state the main result of this section. The following theorem was first proved by Lyubeznik in characteristic $p>0$ in \cite{LyubeznikSomeLocalCohomologyInvariants}, and later a characteristic-free proof using different methods was given in \cite{ZhanghighestLyubeznikNumber}.
\begin{theorem}
\label{main theorem in section of highestNumber}
Let $R$ be a $d$-dimensional local ring that contains a field. Then $\lambda_{d,d}(R)$ is equal to the number of connected components of the Hochster-Huneke graph of $\widehat{R^{sh}}$.
\end{theorem}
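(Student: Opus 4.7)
The plan is to begin by invoking the invariance of $\lambda_{i,j}$ under the faithfully flat extension $R \to \widehat{R^{sh}}$: completion is built into the definition, and passage to the strict Henselization preserves Bass numbers against the residue field. We may therefore assume $R = \widehat{R^{sh}}$, and by Cohen's structure theorem write $R = S/I$ with $S = K\llbracket x_1, \dots, x_n\rrbracket$ and $K$ separably closed, so that
\[
\lambda_{d,d}(R) = \dim_K \Ext^d_S\bigl(K, H^{n-d}_I(S)\bigr).
\]

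Next, I would strip away the contribution of lower-dimensional components. Let $P_1, \dots, P_s$ be the minimal primes of $I$ of height exactly $n-d$, set $J := P_1 \cap \dots \cap P_s$, and let $J'$ be the intersection of the remaining minimal primes of $\sqrt{I}$. Because every such prime properly contains no $P_i$, one has $\height J' \geq n-d+1$ and $\height(J + J') \geq n-d+1$; the Cohen--Macaulayness of $S$ then gives $H^j_{J'}(S) = 0$ and $H^j_{J+J'}(S) = 0$ for $j \leq n-d$, and Mayer--Vietoris (Remark \ref{MVSequence}) yields $H^{n-d}_I(S) = H^{n-d}_{\sqrt I}(S) \cong H^{n-d}_J(S)$.

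The key structural step is to split according to the connected components of $\Gamma_R$. Let $C_1, \dots, C_r$ be these components and set $J_c := \bigcap_{P_i \in C_c} P_i$. For $c \neq c'$, no prime in $C_c$ is joined to one in $C_{c'}$, which forces $\height(P + P') \geq n - d + 2$ for every $P \in C_c$, $P' \in C_{c'}$; passing to unions of the corresponding closed sets, $\height(J_c + J_{c'}) \geq n-d+2$, so $H^j_{J_c + J_{c'}}(S) = 0$ for $j \leq n - d + 1$. Iterated Mayer--Vietoris therefore produces
\[
H^{n-d}_J(S) \;\cong\; \bigoplus_{c=1}^{r} H^{n-d}_{J_c}(S),
\qquad
\lambda_{d,d}(R) \;=\; \sum_{c=1}^{r} \dim_K \Ext^d_S\bigl(K, H^{n-d}_{J_c}(S)\bigr).
\]

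The main obstacle is the final step: showing that every summand contributes exactly $1$. Equivalently, for an equidimensional complete local ring $R_c := S/J_c$ with separably closed residue field and \emph{connected} Hochster--Huneke graph, one must prove $\lambda_{d,d}(R_c) = 1$; this is the real content of the theorem. In characteristic $p > 0$, Lyubeznik handles it by exploiting the $F$-module structure on $H^{n-d}_{J_c}(S)$, ultimately reducing to the analytically normal case covered by Properties \ref{FirstProp}(3). A characteristic-free argument, as in \cite{ZhanghighestLyubeznikNumber}, instead combines a spectral-sequence analysis with a Hartshorne--Lichtenbaum-style connectedness result to show that connectedness of the graph precludes any further splitting of the relevant $\Ext$ group and pins its $K$-dimension to one.
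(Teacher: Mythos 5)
Your reduction steps match the paper's: invariance of $\lambda_{d,d}$ under the faithfully flat map $R \to \widehat{R^{sh}}$, passage to a presentation $S/I$ with $S$ complete regular and $K$ separably closed, and the Mayer--Vietoris splitting of $H^{n-d}_I(S)$ over the connected components of the Hochster--Huneke graph (the height estimates $\height(J_c+J_{c'}) \geq n-d+2$ for distinct components are correct, and they do yield $\lambda_{d,d}(R) = \sum_c \lambda_{d,d}(S/J_c)$ exactly as in the paper). Up to that point your argument is sound and is essentially the paper's.

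However, there is a genuine gap at the step you yourself flag as ``the real content of the theorem'': you do not prove that $\lambda_{d,d}=1$ for a complete equidimensional local ring with separably closed residue field and connected graph; you only cite that Lyubeznik and Zhang have proved it. A proof cannot defer its central claim entirely to the sources whose result it is reproving. The paper's sketch, by contrast, actually outlines the mechanism: induction on $d$, with base case $d=2$ settled by Walther via the Second Vanishing Theorem, and inductive step carried out by choosing, via prime avoidance, an element $s \in \mathfrak{m}$ lying outside every minimal prime of $I$ and outside every minimal element of $\Supp_S H^{n-d+1}_I(S)$, then verifying (1) $\lambda_{d,d}(R)=\lambda_{d-1,d-1}\left(R/\sqrt{\bar{s}R}\right)$ and (2) that the Hochster--Huneke graph of $R/\sqrt{\bar{s}R}$ remains connected. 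Your one-sentence description of the characteristic-free route (``spectral-sequence analysis with a Hartshorne--Lichtenbaum-style connectedness result'') does not capture this dimension-reduction argument and would not let a reader reconstruct it. To complete the proposal you need to supply, at minimum, the base case and the construction and two properties of the cutting element $s$.
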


\begin{proof}[Sketch of proof] Let $B=\widehat{R^{sh}}$.  Since $\lambda_{d,d}(R)$ does not change under faithfully flat extensions, we may replace $R$ by $B$, and assume that $R$ is a complete local ring that contains a separably closed coefficient field. 
Let $\Gamma$ denote the Hochster-Huneke graph of $R$. 

Let $\Gamma_1,\dots,\Gamma_t$ denote the connected components of $\Gamma$. For $1 \leq j \leq t$, let $I_j$ be the intersection of the minimal primes of $R$ that are vertices of $\Gamma_j$.  Using the Mayer-Vietoris sequence of local cohomology (see Remark \ref{MVSequence}), one can prove that 
\[\lambda_{d,d}(R)=\sum_{j=1}^t\lambda_{d,d}(R/I_j).\]
Hence, we are reduced to proving that $\lambda_{d,d}(R)=1$ when $R$ is equidimensional (since each $R/I_j$ is) and $\Gamma$ is connected.

We now proceed by induction on $d:=\dim(R)$. When $d =2$, the theorem has already been established by Walther in \cite{WaltherLyubeznikNumbers} using the ``Second Vanishing Theorem" of local cohomology \cite[Theorem 1.1]{HunekeLyuVanishing}.

Suppose now that $d\geq 3$, and assume that the theorem holds for all rings of dimension less than $d$. 
Since $R$ is complete, we can write $R=S/I$, where $(S,\mathfrak{m})$ is an $n$-dimensional complete regular local ring that contains a separably closed field, and $I$ is an ideal of $S$. We will choose a special element $s \in S$ as follows:  
Note that there are only finitely many minimal elements of $\Supp_S(H^{n-d+1}_I(S))$  (\cite[Corollary 3.6]{LyubeznikFinitenessLocalCohomologyModules}). 
If $\Ass_S \(H^{n-d+1}_I(S)\)\neq \{\mathfrak{m}\}$, then by prime avoidance, we can fix an element $s \in \mathfrak{m}$ that is not in any minimal prime of $I$, nor in any minimal element of $\Supp_S\(H^{n-d+1}_I(S)\)$. On the other hand, if $\Ass_S\(H^{n-d+1}_I(S)\) = \{\mathfrak{m}\}$, then fix $s \in \mathfrak{m}$ that is not in any minimal prime of $I$.

Note that by our choice of $s$, if  $\bar{s}$ denotes the image of $s$ in $R$, 
 the ring $R/\bar{s}R$ is equidimensional and $\dim(R/\bar{s}R)=d-1$.  
 Moreover, one can prove that $\bar{s}$ satisfies the following two properties:
\begin{enumerate}
\item $\lambda_{d,d}(R)=\lambda_{d-1,d-1} \left(R / \sqrt{\bar{s}R}\right)$, and
\item The Hochster-Huneke graph of $R/\sqrt{\bar{s}R}$ is connected.
\end{enumerate}
Once these are established, we have $\lambda_{d,d}(R)=\lambda_{d-1,d-1}\left(R/\sqrt{\bar{s}R}\right)=1$ by the inductive hypotheses, completing the sketch.
\end{proof}

As an application of Theorem \ref{main theorem in section of highestNumber}, we will give a much simpler proof of the following theorem, originally proved by Kawasaki in \cite{KawasakiHighestLyubeznikNumber} using a spectral sequence argument. 
\begin{theorem}
\label{S2 implies lambda to be 1}
Let $(S,\mathfrak{m})$ be a regular local ring that contains a field, let $I$ be an ideal of $S$, and let $d = \dim(S/I)$.  If $S/I$ satisfies Serre's $S_2$-condition, then $\lambda_{d,d}(S/I)=1$.
\end{theorem}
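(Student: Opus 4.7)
The plan is to deduce Theorem \ref{S2 implies lambda to be 1} from Theorem \ref{main theorem in section of highestNumber} combined with classical connectedness properties of $S_2$ rings. Since Theorem \ref{main theorem in section of highestNumber} identifies $\lambda_{d,d}(S/I)$ with the number of connected components of the Hochster-Huneke graph of $B := \widehat{(S/I)^{sh}}$, it suffices to prove that $\Gamma_B$ is connected. First I would verify that the $S_2$ hypothesis descends to $B$: after replacing $S$ by its completion (which does not affect Lyubeznik numbers), $S$ is a complete regular local ring containing a field and hence excellent, so $S/I$ and the successive flat extensions $(S/I)^{sh}$ and $B = \widehat{(S/I)^{sh}}$ all remain $S_2$ by standard permanence results for flat base changes with regular fibers.

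Assume $B$ is $S_2$, which is known to force $B$ to be equidimensional; hence the vertices of $\Gamma_B$ are exactly the minimal primes of $B$. Suppose for contradiction that $\Gamma_B$ has at least two connected components. Partition the minimal primes into two disjoint nonempty collections and let $J_1$ and $J_2$ denote the intersections of the two resulting collections. By the definition of $\Gamma_B$, $\height(J_1 + J_2) \geq 2$. Select a minimal prime $\mathfrak{q}$ of $J_1 + J_2$, so that $\height(\mathfrak{q}) \geq 2$, and by $S_2$, $\Depth(B_\mathfrak{q}) \geq 2$.

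By Hartshorne's classical connectedness theorem, the punctured spectrum of any local ring of depth at least two is connected, so $\Spec B_\mathfrak{q} \setminus \{\mathfrak{q} B_\mathfrak{q}\}$ is connected. On the other hand, $\mathbb{V}(J_1 B_\mathfrak{q}) \cup \mathbb{V}(J_2 B_\mathfrak{q}) = \mathbb{V}(J_1 B_\mathfrak{q} \cap J_2 B_\mathfrak{q}) = \mathbb{V}(\mathrm{nil}(B_\mathfrak{q})) = \Spec B_\mathfrak{q}$, while $\mathbb{V}(J_1 B_\mathfrak{q}) \cap \mathbb{V}(J_2 B_\mathfrak{q}) = \mathbb{V}((J_1 + J_2) B_\mathfrak{q}) = \{\mathfrak{q} B_\mathfrak{q}\}$ since $\mathfrak{q}$ is minimal over $J_1 + J_2$. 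Hence $\Spec B_\mathfrak{q} \setminus \{\mathfrak{q} B_\mathfrak{q}\}$ decomposes as the disjoint union of the closed subsets $\mathbb{V}(J_1 B_\mathfrak{q}) \setminus \{\mathfrak{q} B_\mathfrak{q}\}$ and $\mathbb{V}(J_2 B_\mathfrak{q}) \setminus \{\mathfrak{q} B_\mathfrak{q}\}$, both nonempty (each contains some minimal prime $P B_\mathfrak{q}$ with $P \subsetneq \mathfrak{q}$). This yields the desired contradiction.

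The main technical point is the descent of $S_2$ to $B$, which relies on excellence. Once that is in place, the rest of the argument is a short combination of Theorem \ref{main theorem in section of highestNumber} with Hartshorne's classical punctured-spectrum connectedness theorem; alternatively, one could bypass the appeal to Hartshorne via a direct depth computation using the Mayer-Vietoris sequence (Remark \ref{MVSequence}) applied to $0 \to B/\mathrm{nil}(B) \to B/J_1 \oplus B/J_2 \to B/(J_1 + J_2) \to 0$ localized at $\mathfrak{q}$, exploiting $\Depth(B_\mathfrak{q}) \geq 2$ to force $B_\mathfrak{q}/(J_1 + J_2)B_\mathfrak{q} = 0$.
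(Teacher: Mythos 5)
Your argument follows the same route as the paper's proof: both reduce, via Theorem \ref{main theorem in section of highestNumber}, to showing that the Hochster--Huneke graph of $B=\widehat{(S/I)^{sh}}$ is connected, and both get there through the chain ($S_2$ ascends to $B$) $\Rightarrow$ ($B$ is equidimensional, using that $B$ is complete, hence catenary) $\Rightarrow$ ($\Gamma_B$ is connected). The one substantive difference is the last implication: the paper quotes \cite[Theorem 3.6]{HochsterHunekeIndecomposable} as a black box, whereas you reprove it from scratch --- partitioning the minimal primes according to a separation of $\Gamma_B$, observing that a minimal prime $\mathfrak{q}$ of $J_1+J_2$ has $\height(\mathfrak{q})\geq 2$ and hence $\Depth(B_{\mathfrak{q}})\geq 2$ by $S_2$, and then disconnecting the punctured spectrum of $B_{\mathfrak{q}}$ in contradiction with Hartshorne's connectedness theorem. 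That inline argument is correct (it is essentially the proof of the Hochster--Huneke result), so you trade brevity for self-containedness; the two proofs are otherwise the same.

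One caveat: your justification that $S_2$ ascends to $B$ is circular as written. You first replace $S$ by $\widehat{S}$ and only then invoke excellence; but the hypothesis is that $S/I$ is $S_2$, and transferring this to $\widehat{S}/I\widehat{S}=\widehat{S/I}$ is itself a completion of a possibly non-excellent ring --- exactly the step excellence was meant to control, and regular local rings containing a field need not be excellent. The statement is nevertheless true: since $S$ is regular, hence Cohen--Macaulay, the formal fibers of $S$ (and therefore of $S/I$ and of $(S/I)^{sh}=S^{sh}/IS^{sh}$) are Cohen--Macaulay, and $S_2$ ascends along flat local maps with Cohen--Macaulay fibers; the fibers of $R\to R^{sh}$ are separable field extensions, so that step is harmless as you say. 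The paper asserts this ascent with no justification at all, so this is a presentational repair rather than a gap in substance. (Your closing ``alternative'' via Mayer--Vietoris and a depth count is sketchier --- it needs $\Depth$ of $(B/\mathrm{nil}(B))_{\mathfrak{q}}$, not of $B_{\mathfrak{q}}$ --- but since it is offered only as an aside and your main argument stands on its own, this does not affect the proof.)
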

\begin{proof}
Let $R = S/I$, and let $B=\widehat{R^{sh}}$.
According to Theorem \ref{main theorem in section of highestNumber}, it suffices to show that the Hochster-Huneke graph $\Gamma_B$ is connected. As $S$ is regular, so is also Cohen-Macaulay, it is universally catenary ({\it cf.} \cite[Theorem 17.9]{MatsumuraCommutativeRingTheory}).  Thus, $B$ is catenary. Since $R$ satisfies the $S_2$ condition, so does $B$. Then \cite[Remark 2.4.1]{HartshorneCompleteIntersectionConnectedness} implies that $B$ is equidimensional. Therefore, $\Gamma_B$ must be connected by \cite[Theorem 3.6]{HochsterHunekeIndecomposable}.
\end{proof}

It turns out that the converse of Theorem \ref{S2 implies lambda to be 1} does not hold, as the following example from \cite{KawasakiHighestLyubeznikNumber} illustrates.
\begin{example}\label{ExMon1}
Let $K$ be a field, and let $S=K[x_1,x_2,x_3,x_4,x_5,x_6]_{(x_1,\dots,x_6)}$; moreover, let 
\begin{equation} \label{exmon1} I=(x_1,x_2,x_3)\cap (x_2,x_3,x_4)\cap (x_3,x_4,x_5)\cap (x_4,x_5,x_6)\cap (x_5,x_6,x_1).
\end{equation}
It is clear that $\dim(S/I)=3$, and one can check that $\lambda_{3,3}(S/I)=1$. 
If we set $P=(x_1,x_2,x_3,x_5,x_6)$, then the depth of $(S/I)_P$ is 1, but the height of $P$ in $S/I$ is 2. Hence, $S/I$ does not satisfy the $S_2$ condition.
\end{example}

{%\cb
We end this section by pointing out a related recent result of Schenzel \cite{Schenzel2011}. Suppose that $S$ is a complete regular local ring of equal characteristic and that $I$ is an ideal of $S$ of height $c$, and let $d=\dim(S/I)$.
Schenzel proved that $B:=\Hom_S(H^c_I(S),H^c_I(S))$ is a free $S$-module of rank $\lambda_{d,d}(S/I)$ \cite{Schenzel2011}. }
\section{Lyubeznik numbers for projective schemes} \label{ProjSchemesSec}

Throughout this section, we fix the following notation.

\begin{notation}
Fix a projective scheme $X$ over a field $K$.
Let  $S=K[x_0,\cdots,x_n]$, and consider the standard grading on $S$; \emph{i.e.}, $\deg(x_i)=1$ for $0 \leq i \leq n$.
Let $\fm$ denote is homogeneous maximal ideal, and let $I$ be a homogeneous ideal of $S$.
Under an embedding $\iota: X\hookrightarrow \mathbb{P}^n_K$, we may write $X=\Proj(S/I)$.
Let $R =(S/I)_{\fm}$, the local ring at the vertex of the affine cone over $X$, which clearly contains a field.
\end{notation}

In our setting, we may consider the Lyubeznik numbers of $R$. Theorem \ref{main theorem in section of highestNumber} has the following interesting consequence.

\begin{theorem} \label{Consequence}
In our setting, if $K^{sep}$ denotes the separable closure of $K$, let $X_1,\dots,X_t$ denote the $d$-dimensional irreducible components of $X^{sep}:=X\times_K K^{sep}$.  Let $\Gamma_X$ be the graph on vertices $X_1,\dots,X_t$, where $X_i$ and $X_j$ are joined by an edge if and only if $\dim(X_i\cap X_j)=d-1$. 
Then $\lambda_{d+1,d+1}(R)$ is the number of connected components of $\Gamma_X$. Consequently, $\lambda_{d+1,d+1}(R)$ is an invariant of $X$, independent of the choice of embedding $\iota$.   
\end{theorem}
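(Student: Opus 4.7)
The plan is to reduce to Theorem \ref{main theorem in section of highestNumber} by noting that $\dim R = d+1$ (the affine cone over $X = \Proj(S/I)$ has dimension $d+1$), so
\[ \lambda_{d+1,d+1}(R) = \#\{\text{connected components of } \Gamma_B\}, \qquad B := \widehat{R^{sh}}. \]
My goal then becomes to exhibit an isomorphism of graphs $\Gamma_B \cong \Gamma_X$. Once established, the number of connected components matches and, since $\Gamma_X$ is defined purely in terms of $X^{sep}$ and its $d$-dimensional components, $\lambda_{d+1,d+1}(R)$ will be seen to depend only on $X$, not on the embedding $\iota$.

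First I would set up the vertex bijection. The strict Henselization $R^{sh}$ has residue field $K^{sep}$, and its branches at the vertex of the cone are the localizations of $(S/I)\otimes_K K^{sep}$ at the irrelevant maximal ideal along the various minimal primes of $(S/I)\otimes_K K^{sep}$. Since $\Proj$ and base change commute, these minimal primes (viewed projectively) are the irreducible components of $X^{sep}$. Passing to the completion $B$ does not create new top-dimensional components in this setting, because the top-dimensional part of the cone over each $X_i$ is unmixed, so the $d$-dimensional irreducible components $X_1,\dots,X_t$ of $X^{sep}$ correspond bijectively to the top-dimensional minimal primes $P_1,\dots,P_t$ of $B$ via $X_i \leftrightarrow P_i$.

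Next I would translate the edge condition. Under the correspondence above, the quotient $B/(P_i+P_j)$ is (the completion at the vertex of) the cone over $X_i\cap X_j$, so
\[ \dim B/(P_i+P_j) \;=\; \dim(X_i\cap X_j)+1. \]
Since $\dim B = d+1$, this gives $\height_B(P_i+P_j)=d-\dim(X_i\cap X_j)$, which equals $1$ precisely when $\dim(X_i\cap X_j)=d-1$. Thus the edge sets of $\Gamma_B$ and $\Gamma_X$ match under the vertex bijection, completing the required graph isomorphism.

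The step I expect to be the main obstacle is the vertex bijection, specifically verifying that completion from $R^{sh}$ to $B$ does not split any top-dimensional component further. This requires a careful argument distinguishing the analytic branches at the cone vertex from generic singularities; one clean way is to note that each cone over an irreducible component $X_i$ is an integral homogeneous affine variety, so its local ring at the origin is already analytically irreducible in the top-dimensional stratum after base change to $K^{sep}$. Granting this, the graph isomorphism $\Gamma_B \cong \Gamma_X$ follows, and the theorem is immediate.
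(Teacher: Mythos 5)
Your argument is correct and is exactly the route the paper intends: Theorem \ref{Consequence} is presented as a direct consequence of Theorem \ref{main theorem in section of highestNumber}, obtained by identifying $\Gamma_X$ with the Hochster--Huneke graph of $B=\widehat{R^{sh}}$, and your vertex bijection together with the height computation $\height_B(P_i+P_j)=d-\dim(X_i\cap X_j)$ supplies the needed details. One small caveat: unmixedness is not what prevents completion from splitting components --- the correct justification is the one you give at the end, namely that the cone over each integral component is analytically irreducible at the vertex (because the associated graded ring of a standard graded domain at its irrelevant maximal ideal is the domain itself).
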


%Given a projective scheme $X$ over a field $K$, we may write $X=\Proj(S/I)$ under an embedding $\eta:X\hookrightarrow \mathbb{P}^n_k$ with $S=K[x_0,\cdots,x_n]$ and $I$ a graded ideal of $S$. Then it is clear that $R:=(S/I)_{(x_0,\dots,x_n)}$, the local ring at the vertex of the affine cone over $X$, is a local ring that contains a field. Hence we may consider the Lyubeznik numbers of $R$. Theorem \ref{main theorem in section of highestNumber} has the following interesting consequence.
%
%\begin{theorem} \label{Consequence}
%Let $X$ be a $d$-dimensional projective scheme over a field $K$. Under some embedding $\iota: X\hookrightarrow \mathbb{P}^n_K$, write $X=\Proj(S/I),$ where $S=K[x_0,\dots,x_n]$ and $I$ is a homogeneous ideal of $S$. 
%Let $R$ denote $(S/I)_{(x_0,\dots,x_n)}$. Let $X_1,\dots,X_t$ be the $d$-dimensional irreducible components of $X^{sep}:=X\times_K K^{sep}$, where $K^{sep}$ is the separable closure of $K$.  Let $\Gamma_X$ be the graph on vertices $X_1,\dots,X_t$ where $X_i$ and $X_j$ are joined by an edge if and only if $\dim(X_i\cap X_j)=d-1$. Then $\lambda_{d+1,d+1}(R)$ is the number of connected components of $\Gamma_X$. Consequently, $\lambda_{d+1,d+1}(R)$ is an invariant of $X$, independent of the choice of embedding $\iota$.   
%\end{theorem}

This theorem prompts the following question.

\begin{question}
\label{independence of L numbers of projective schemes}
%Take $X$ and $R$ as in Theorem \ref{Consequence}. 
In our setting, is it true that $\lambda_{i,j}(R)$ depends only on $X,i$, and $j$; \emph{i.e.}, it is independent of the choice of embedding $\iota: X\hookrightarrow \mathbb{P}^n_K$? 
\end{question} 

\noindent When $K$ has characteristic $p>0$, the answer to Question \ref{independence of L numbers of projective schemes} turns out to be affirmative\footnote{Recently, it has been proved in \cite{Switala} that Question \ref{independence of L numbers of projective schemes} has a positive answer for a nonsingular projective variety $X$ in characteristic zero.}. The main goal of this section is to sketch the proof of the following theorem \cite[Theorem 1.1]{ZhangLyubeznikNumbersProjSchemes}.

\begin{theorem}
\label{independence in char p}
%Take $X$ and $R$ as in Theorem \ref{Consequence}, 
In our setting, assume that $K$ has characteristic $p>0$. Then each $\lambda_{i,j}(R)$ depends only on $X,i$, and $j$; in particular, it does not depend on the choice of embedding $\iota: X\hookrightarrow \mathbb{P}^n_K$.
\end{theorem}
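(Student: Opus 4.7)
The plan is to show that each Lyubeznik number $\lambda_{i,j}(R)$ equals the $K$-dimension of cohomological data that depends only on $X$, not on the embedding $\iota$. First, since the Bass numbers of a graded $S$-module at the irrelevant maximal ideal agree with those computed after localizing at $\fm$, and $R=(S/I)_\fm$, one has
$$\lambda_{i,j}(R)=\dim_K\Ext^i_S\bigl(K,H^{n+1-j}_I(S)\bigr),$$
where $S=K[x_0,\dots,x_n]$ has dimension $n+1$. Both sides inherit a natural grading from the standard grading on $S$, and I would work throughout in this graded category.

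Next, I would combine graded local duality (Remark \ref{LocalDuality}) with the spectral sequence $E^{p,q}_2=H^p_\fm(H^q_I(S))\Rightarrow H^{p+q}_\fm(S)$ in order to express the Bass numbers appearing above in terms of $K$-dimensions of graded pieces of the iterated local cohomology modules $H^p_\fm(H^q_I(S))$, up to small correction terms coming from the low-degree edge maps. By Remark \ref{SheafCohomology}, each such graded piece is controlled by a sheaf cohomology group $H^{p-1}\bigl(\mathbb{P}^n,\widetilde{H^q_I(S)}(\ell)\bigr)$ when $p\geq 2$, with a mild correction involving the unit map into global sections when $p=0,1$.

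The main obstacle — and the step where the hypothesis $\operatorname{char}(K)=p>0$ is essential — is showing that the sheaf cohomology groups above are intrinsic to $X$, even though a priori they are computed on $\mathbb{P}^n$ using the embedding. Here one exploits that in positive characteristic every $H^q_I(S)$ is canonically a unit $F$-module in Lyubeznik's sense, presented as a direct limit of Frobenius iterates of a coherent root module supported on Frobenius thickenings of the subscheme cut out by $I$. Passing to sheaves, $\widetilde{H^q_I(S)}$ becomes a direct limit of coherent $\mathcal{O}_{\mathbb{P}^n}$-modules supported on Frobenius thickenings of $X$, and one may then apply the Frobenius descent technique of Blickle and Bondu (built ultimately on the Artin-Schreier sequence) to identify the resulting cohomology with étale $\mathbb{F}_p$-cohomology of $X$ (or of the punctured affine cone on $X$), which is manifestly independent of the choice of $\iota$. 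Assembling these identifications expresses every $\lambda_{i,j}(R)$ as a $K$-dimension of cohomology computed from $X$ alone, and the theorem follows. The characteristic zero case is not accessible by this method because no comparable Frobenius descent is available for the $D$-module structure on local cohomology, which is why Question \ref{independence of L numbers of projective schemes} remains open in that setting.
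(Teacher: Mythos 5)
Your opening reductions contain a distracting detour: no spectral sequence or ``low-degree edge maps'' are needed to pass from $\Ext^i_S(K,H^{n+1-j}_I(S))$ to iterated local cohomology, because $H^i_\fm H^{n+1-j}_I(S)$ is an injective module supported at $\fm$, so $\lambda_{i,j}(R)$ is exactly its socle dimension (equivalently, the number of copies of $E_S(K)$ in it) with no correction terms. That is a cosmetic issue. The genuine gap is in your final step. The Lyubeznik numbers of the vertex of the cone over $X$ are \emph{not} in general dimensions of \'etale $\mathbb{F}_p$-cohomology groups of $X$ (or of the punctured cone): the Blickle--Bondu/Artin--Schreier comparison you invoke requires strong hypotheses (in their theorem, that the modules $H^{n-i}_{[Y]}(\cO_X)$ be supported at a single point for $i\neq d$), and even where it applies it produces $\mathbb{F}_p$-dimensions of Frobenius-fixed points, whereas $\lambda_{i,j}(R)$ is a $K$-dimension of a \emph{stable image} under a $p$-linear map --- these coincide only after the stable part is shown to be spanned by fixed points over a separably closed field, and in any case the identification with \'etale cohomology of $X$ itself fails for general singular $X$. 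Moreover, your intermediate object --- $\widetilde{H^q_I(S)}$ as a limit of coherent sheaves on Frobenius thickenings of $X$ inside $\mathbb{P}^n$ --- is still embedding-dependent data (the ideals $I^{[p^e]}$ live in $S$); nothing in your argument transfers it to data intrinsic to $X$.

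The paper closes exactly this gap by dualizing first. Using graded local duality one replaces $H^i_\fm H^{n+1-j}_I(S)$ by the degree-zero piece of the finitely generated module $\cE^{i,j}(S/I)=\Ext^{n+1-i}_S\bigl(\Ext^{n+1-j}_S(S/I,S),S\bigr)$, and then Serre duality plus Grothendieck duality for the finite morphism $X\to\mathbb{P}^n$ identify $\cE^{i,j}(S/I)_0$ with $\E^{i,j}(X,\cO_X)=\Ext^{1-i}_X\bigl(\cE xt^{1-j}(\cO_X,\omega^{\bullet}_X),\omega^{\bullet}_X\bigr)$, which is manifestly intrinsic to $X$; this part is characteristic-free. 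The characteristic-$p$ input is then the Hartshorne--Speiser--Lyubeznik formula $\lambda_{i,j}(R)=\dim_K\bigl(\cE^{i,j}(S/I)_0\bigr)_s$, the stable part under the natural $p$-linear map $\alpha$ coming from the Frobenius functor, together with a check that $\alpha$ corresponds to the Frobenius-induced $p$-linear structure on $\E^{i,j}(X,\cO_X)$. Your instinct that the unit $F$-module structure is what makes positive characteristic work is correct, but it must be channeled through this duality-plus-stable-part mechanism rather than through an \'etale comparison theorem that does not hold in the required generality.
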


%Before beginning our sketch, we recall some well-known facts regarding local cohomology and sheaf cohomology that will be used.

%\begin{itemize}
%\item[(GLD)] \textbf{Graded Local Duality.} Let $M$ be a finite graded $S$-module, then there is a functorial isomorphism
%$$H^t_{\fm}(M)_\ell\cong \Hom_k\left(\Ext^{n+1-t}_S(M,S(-n-1))_{-\ell},k\right)$$
%for all integers $t$ and $\ell$ (cf. \cite[13.4.6]{BrodmannSharpLocalCohomology}).\par
%If we use $\D(-)$ to denote the graded Matlis dual (for any graded $S$-module $M$, the graded Matlis dual $\D(M)$ of $M$ is defined to be the graded $S$-module with $\D(M)_\ell=\Hom_k(M_{-\ell},k)$, then Graded Local Duality can be stated as
%\[H^t_{\fm}(M)\cong \D\left(\Ext^{n+1-t}_S(M,S(-n-1))\right),\]
%for all finitely generated graded $S$-modules $M$ (see \cite[13.3 and 13.4]{BrodmannSharpLocalCohomology} for details), and the isomorphism is degree-preserving.
%
%\item[(LCSC)] \textbf{Connection between local cohomology and sheaf cohomology.} Let $M$ be a finitely generated graded $S$-module, and let $\widetilde{M}$ be the sheaf on $\mathbb{P}^n$ associated to $M$. Then there are a functorial isomorphisms (cf. \cite[R4.1]{EisenbudCommutativeAlgebra}) 
%$$H^t_{\fm}(M)\cong \bigoplus_{\ell\in \mathbb{Z}}H^{t-1}(\mathbb{P}^n,\widetilde{M}(\ell))\ {\rm when}\ t\geq 2,$$
%and an exact sequence (functorial in $M$) of degree-preserving maps
%$$0\to H^0_{\fm}(M)\to M\to \bigoplus_{\ell\in \mathbb{Z}}H^0(\mathbb{P}^n,\widetilde{M}(\ell))\to H^1_{\fm}(M)\to 0.$$
%\end{itemize}

Noting Remark \ref{SheafCohomology}, one can see that if the index $i$ equals zero or one, the connection between local cohomology and sheaf cohomology is more complicated than in the other cases; consequently, the proof of Theorem \ref{independence in char p} is more technical. 

For this reason, we will sketch the proof of  Theorem \ref{independence in char p} in the case that $i \geq 2$, and will, accordingly, make this assumption for the rest of the section.  
%We will now sketch the proof in the case that $i\geq 2$; accordingly, we will make this assumption for the rest of this section. 
Since field extensions do not affect the Lyubeznik numbers, we will also assume that $K$ is algebraically closed. For each $S$-module $M$, set 
\[\cE^{i,j}(M):=\Ext^{n+1-i}_S\left(\Ext^{n+1-j}_S(M,S),S\right).\]
{\edit{Let $\dualizing$ denote the dualizing complex on $X$ \cite{HartshorneResidues} }}induced by the structure morphism $X\to K$, and 
for each sheaf of $\cO_X$-modules $\mathcal{G}$, set
\begin{equation*} \E^{i,j}(X,\mathcal{G}):=\Ext^{1-i}_X\left(\cE xt^{1-j}(\mathcal{G},\dualizing),\dualizing\right). \end{equation*}
Consider the following characteristic-free result.
\begin{proposition} \label{zeroIndependent}
The degree zero piece of $\cE^{i,j}(S/I)$, $\cE^{i,j}(S/I)_0$, depends only on $X,i$, and $j$; in particular, it is independent of the choice of embedding $\iota: X\hookrightarrow \mathbb{P}^n_k$.
\end{proposition}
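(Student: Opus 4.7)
My strategy is to exhibit an embedding-free isomorphism
\[\cE^{i,j}(S/I)_0 \cong \E^{i,j}(X, \cO_X) = \Ext^{1-i}_X\bigl(\cE xt^{1-j}_X(\cO_X, \dualizing), \dualizing\bigr),\]
whose right-hand side depends only on $X$. Setting $N := \Ext^{n+1-j}_S(S/I, S)$, note that $N$ is a finitely generated graded $S$-module (since $S/I$ admits a resolution by finitely generated graded free $S$-modules), so graded local duality (Remark \ref{LocalDuality}) applies to $N$ and gives
\[\cE^{i,j}(S/I)_0 = \Ext^{n+1-i}_S(N, S)_0 \cong \Hom_K\bigl(H^i_\fm(N)_{-n-1}, K\bigr),\]
reducing the problem to computing the single graded piece $H^i_\fm(N)_{-n-1}$ intrinsically.

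I would then identify $\widetilde{N}$ as a sheaf on $\mathbb{P}^n$. The sheafification formula for $\Ext$ of finitely generated graded $S$-modules gives $\widetilde{N} \cong \cE xt^{n+1-j}_{\cO_{\mathbb{P}^n}}(\iota_*\cO_X, \cO_{\mathbb{P}^n})$, and Grothendieck duality for the closed immersion $\iota$---using $\omega^{\bullet}_{\mathbb{P}^n} = \cO_{\mathbb{P}^n}(-n-1)[n]$ together with $\iota^!\omega^{\bullet}_{\mathbb{P}^n} = \dualizing$---produces
\[\widetilde{N} \cong \iota_*\bigl(\mathcal{H}^{1-j}(\dualizing)\bigr)(n+1),\]
where $\mathcal{H}^{1-j}$ is the $(1-j)$-th cohomology sheaf of $\dualizing$. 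Since $i \geq 2$ by the standing assumption of the section, Remark \ref{SheafCohomology} together with the preservation of sheaf cohomology under the closed immersion $\iota$ yields
\[H^i_\fm(N)_\ell \cong H^{i-1}\bigl(\mathbb{P}^n, \widetilde{N}(\ell)\bigr) \cong H^{i-1}\bigl(X, \mathcal{H}^{1-j}(\dualizing)(n+1+\ell)\bigr),\]
and the choice $\ell = -n-1$ cancels the twist, leaving $H^i_\fm(N)_{-n-1} \cong H^{i-1}\bigl(X, \mathcal{H}^{1-j}(\dualizing)\bigr)$.

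The final step invokes Serre--Grothendieck duality on the projective scheme $X$, namely $\Hom_K(H^{i-1}(X, \cF), K) \cong \Ext^{1-i}_X(\cF, \dualizing)$ for any coherent sheaf $\cF$. Applied to $\cF = \mathcal{H}^{1-j}(\dualizing) = \cE xt^{1-j}_X(\cO_X, \dualizing)$, this yields
\[\cE^{i,j}(S/I)_0 \cong \Ext^{1-i}_X\bigl(\cE xt^{1-j}_X(\cO_X, \dualizing), \dualizing\bigr) = \E^{i,j}(X, \cO_X),\]
which depends only on $X$, $i$, and $j$.

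The principal technical obstacle is the bookkeeping in the Grothendieck duality step: one must carefully track the $\cO_{\mathbb{P}^n}(n+1)$-twist and the cohomological shift $[n]$ coming from $\omega^{\bullet}_{\mathbb{P}^n} = \cO_{\mathbb{P}^n}(-n-1)[n]$, and verify that together they compensate precisely the $(-n-1)$-degree shift introduced by graded local duality, so that the identifications align to produce the genuinely intrinsic $\E^{i,j}(X, \cO_X)$. A secondary point is that Remark \ref{SheafCohomology} is immediate only for $i \geq 2$, matching the standing assumption; the cases $i = 0, 1$ would require separately handling the correction terms in the low-degree exact sequence of Remark \ref{SheafCohomology}.
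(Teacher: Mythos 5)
Your proof is correct and uses exactly the same ingredients as the paper's: graded local duality (Remark \ref{LocalDuality}), the comparison with sheaf cohomology (Remark \ref{SheafCohomology}), Serre duality, and Grothendieck duality for the embedding $\iota$; the only difference is that you apply duality for $\iota$ at the sheaf level before taking cohomology and then dualize on $X$, whereas the paper dualizes on $\mathbb{P}^n$ first and passes through $\iota$ last, which amounts to the same factorization of duality for $X\to\Spec K$ through $\mathbb{P}^n$. The degree and shift bookkeeping you carry out (the $(-n-1)$ twist against $\omega_{\mathbb{P}^n}^{\bullet}=\cO_{\mathbb{P}^n}(-n-1)[n]$) checks out.
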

\begin{proof}
If $\dualizing$ denotes the dualizing complex on $X$ induced by the structure morphism $X\to K$, 
we have the following isomorphisms.
\begin{alignat*}{3}
\cE^{i,j}(S/I)_0 &= \Ext^{n+1-i}_S\left(\Ext^{n+1-j}_S(S/I,S(-n-1)),S(-n-1)\right)_0 & \ \ \ \  \ \ \ \ &  \\
&\cong \D\left(H^i_{\fm}\(\Ext^{n+1-j}_S\(S/I,S(-n-1)\)\)_0\right) && {\rm by \ Remark \ \ref{LocalDuality}} & & \\
&\cong \D\left(H^{i-1}(\mathbb{P}^n_k,\cE xt^{n+1-j}(\cO_X,\omega_{\mathbb{P}^n_k})\right) & &{\rm by \ Remark \ \ref{SheafCohomology} }\\
&\cong \Ext^{n+1-i}_{\mathbb{P}^n_k}\left(\cE xt^{n+1-j}(\cO_X,\omega_{\mathbb{P}^n_k}),\omega_{\mathbb{P}^n_k} \right)\ & & {\rm by\ Serre\ Duality}\\
&\cong \E^{i,j}(X,\cO_X),
\end{alignat*}
where the last isomorphism is a consequence of Grothendieck Duality for finite morphisms.
\end{proof}

Now we make the transition to characteristic $p>0$ setting. 
The fundamental tool in the study of rings $S$ of characteristic $p>0$ is the \emph{Frobenius endomorphism} 
\begin{equation}
F: S  \to S, \ \text{given by} \ F(s) = s^p \label{Frob}
\end{equation}
 and its iterates $F^e:S\to S$ for each $e \geq 1$, given by $F^e(s) = s^{p^e}$. 
 Note that some discussion on singularities defined using these maps appear in Section \ref{generalized}.
  
Very often one must distinguish the source ring from the target ring in the Frobenius endomorphism.  
We will do so by denoting the target ring $S$ by $S^{(e)}$; \emph{i.e.}, $S^{(e)}$ is the same as $S$ as an abelian group, but its $S$-module structure is given via $F^e$:
 for each $s \in S$ and $s'\in S^{(e)}$,  $s \cdot s'=s^{p^e}s'$ . Using this notation, we can introduce the \emph{$e$-th iterated Frobenius functor} (also called the \emph{Peskine-Szpiro functor}), a functor from the category of $S$-modules to itself.  It is denoted $\cF^e$, and given an $S$-module $M$, $\cF^e(M)=S^{(e)}\otimes_SM$.
Since $S$ is a polynomial ring in our setting, it is straightforward to check that $\cF^e$ is an exact functor, as the image of $F^e$ is $S^{p^e}$, and clearly $S$ is a free module over $S^{p^e}$. One can also check the following facts about $\cF^e$.

\begin{proposition}[Basic facts on $\cF^e$]
\label{basic facts on Frobenius}
The Frobenius functor $\cF^e$ satisfies the following properties for all integers $t$ and all $e \geq1$, where each isomorphism is degree-preserving.
\begin{enumerate}
\item $\cF^e(S)\cong S$.
\item $\cF^e(S/I)\cong S/I^{[p^e]}$.
\item $\cF^e\(\Ext^t_S(M,N)\)\cong \Ext^t_S\(\cF^e(M),\cF^e(N)\)$.
\end{enumerate}
\end{proposition}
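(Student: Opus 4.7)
The plan is to verify (1) and (2) by directly unpacking the definition of $\cF^e$, and to deduce (3) by applying $\cF^e$ to a finitely generated graded free resolution of $M$, reducing to the (trivial) case in which $M$ is finitely generated free.

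For (1), I would note that multiplication on $S^{(e)}$ gives an $S$-linear map
\[
\mu: S^{(e)}\otimes_S S \longrightarrow S^{(e)}, \qquad s\otimes s' \mapsto s\cdot s',
\]
with inverse $s\mapsto s\otimes 1$; the target $S^{(e)}$ has the same underlying abelian group as $S$, so $\cF^e(S)\cong S$ as $S$-modules, and the isomorphism is degree-preserving under the standard convention that grades $S^{(e)}$ so that $F^e:S\to S^{(e)}$ is a homomorphism of graded rings.

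For (2), I would apply the right exact functor $\cF^e=S^{(e)}\otimes_S-$ to the short exact sequence $0\to I\to S\to S/I\to 0$ and use (1) to obtain $\cF^e(S/I)\cong S^{(e)}/IS^{(e)}$. Since the $S$-action on $S^{(e)}$ is $s\cdot s'=s^{p^e}s'$, the submodule $IS^{(e)}\subseteq S^{(e)}=S$ is precisely the ideal generated by $\{f^{p^e}:f\in I\}$, which is $I^{[p^e]}$ by definition.

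For (3), I would take a resolution $P_\bullet\to M$ by finitely generated graded free $S$-modules, which exists since $S$ is Noetherian. Because $\cF^e$ is exact (as noted in the excerpt, $S$ is free over $S^{p^e}$) and commutes with finite direct sums, $\cF^e(P_\bullet)\to \cF^e(M)$ is a resolution of $\cF^e(M)$ by finitely generated graded free $S$-modules, invoking (1). For any such free module $P\cong S^r$ and any $S$-module $N$, there is a natural isomorphism
\[
\cF^e\!\left(\Hom_S(P,N)\right) \;\cong\; \cF^e(N)^r \;\cong\; \Hom_S\!\left(\cF^e(P),\cF^e(N)\right),
\]
which follows immediately from $\cF^e(S)\cong S$ and the compatibility of $\cF^e$ with finite direct sums. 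Applying this term by term gives an isomorphism of cochain complexes $\cF^e(\Hom_S(P_\bullet,N)) \cong \Hom_S(\cF^e(P_\bullet),\cF^e(N))$, and taking $H^t$ produces (3). The main obstacle is the bookkeeping to check that each isomorphism is degree-preserving with respect to the standard grading on $\cF^e(-)$; this is mechanical, since the substantive content has already been packaged into exactness of $\cF^e$, the definition of $I^{[p^e]}$, and compatibility with finite direct sums.
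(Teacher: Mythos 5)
Your argument is correct and is precisely the standard Peskine--Szpiro argument that the paper leaves to the reader (the survey states these facts without proof, remarking only that "one can check" them after noting that $\cF^e$ is exact because $S$ is free over $S^{p^e}$). The one point worth making explicit is that in (3) you need $M$ to be finitely generated (hence finitely presented, so that a resolution by finitely generated graded free modules exists and the canonical natural map $\cF^e(\Hom_S(P,N))\to\Hom_S(\cF^e(P),\cF^e(N))$ is an isomorphism in each degree); this hypothesis is implicit in the paper, where (3) is only ever applied to $S/I$ and to finitely generated Ext modules.
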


It follows from Proposition \ref{basic facts on Frobenius} that we have the composition of natural maps
\[\cE^{i,j}(S/I)\to \cF^e(\cE^{i,j}(S/I))=S^{(e)}\otimes_S\cE^{i,j}(S/I)\xrightarrow{\sim} \cE^{i,j}(S/I^{[p^e]})\to \cE^{i,j}(S/I),\]
where the first map is given by $z\mapsto 1\otimes z$ for each $z\in \cE^{i,j}(S/I)$, and the last map is induced by the natural surjection $S/I^{[p^e]}\surj S/I$. We can restrict the composition to $\cE^{i,j}(S/I)_0$, and we call the restriction $\alpha$. Note that $\alpha$ is not $K$-linear, but $\alpha$ satisfies $\alpha(cz)=c^p\alpha(z)$ for every $c\in K$ and $z\in \cE^{i,j}(S/I)_0$. Such a map is called a {\it $p$-linear structure}. Given this $p$-linear structure $\alpha$ on $\cE^{i,j}(S/I)_0$, we consider its stable part,
\[\left(\cE^{i,j}(S/I)_0\right)_s:=\bigcap_{e\geq 1}\alpha^e\left(\cE^{i,j}(S/I)_0\right).\]

\begin{proof}[Sketch of the proof of Theorem \ref{independence in char p} when $i\geq 2$]
One can prove that 
\[\lambda_{i,j}(R)=\dim_K \(\cE^{i,j}(S/I)_0\)_s.\]
Since $\cE^{i,j}(S/I)_0$ is independent of the embedding by Proposition \ref{zeroIndependent}, it remains to show that the $p$-linear structure, $\alpha$, on $\cE^{i,j}(S/I)_0$ is also independent of the embedding. To this end, one can consider the the natural $p$-linear structure $\beta$ on $\E^{i,j}(X,\cO_X)$ induced by the Frobenius endomorphism $F_X:X\to X$, and prove that the diagram
\[\xymatrix{
\cE^{i,j}(S/I)_0 \ar[r]^{\sim} \ar[d]^{\alpha}& \E^{i,j}(X,\cO_X)\ar[d]^{\beta} \\
\cE^{i,j}(S/I)_0 \ar[r]^{\sim} & \E^{i,j}(X,\cO_X)
}\] commutes.  
Thus, \[\dim_K\(\cE^{i,j}(S/I)_0\)_s=\dim_K \E^{i,j}(X,\cO_X)_s,\] 
where $\E^{i,j}(X,\cO_X)_s$ is the stable part of $\E^{i,j}(X,\cO_X)$ under $\beta$, which depends only on $X,i$, and $j$, completing the proof.
\end{proof}
\section{Further geometric and topological properties}

Suppose that $V$ is a scheme of finite type over $\CC$ with an isolated singularity at $p\in V$; moreover, let $R = \mathcal{O}_{V,p}$.  
Lyubeznik remarked in \cite{LyubeznikFinitenessLocalCohomologyModules} that by combining results of Ogus on connections of local cohomology modules with de Rham cohomology in \cite{OgusLocalCohomologicalDimension} with results of Hartshorne relating de Rham cohomology and singular cohomology \cite{HartshorneOntheDeRhamCohomology}, one can show that if $j\leq \dim(R) - 1$, then $\lambda_{0, j}(R) = \Dim_{\CC} H^j_p(V,\mathbb{C})$, where $H^j_p(V,\mathbb{C})$ denotes the $j$-th singular cohomology group of $V$ with complex coefficients and support in $p$.   

In \cite{GarciaSabbah}, Garc\'ia L\'opez and Sabbah extend this result, computing all the Lyubeznik numbers in this case in terms of vector space dimensions of singular cohomology groups. This work employs the Riemann-Hilbert correspondence and duality for holonomic $D$-modules.

\begin{theorem}  \label{GS} Let $V$ be a scheme of finite type over $\CC$ with an isolated singularity at the point $x_0$.  Let $R= \mathcal{O}_{V,x_0}$ and let $d = \dim(R)$.
If $d=1$, then $\lambda_{1,1}(R) = 1$ and all other $\lambda_{i,j}(R)=0$.  On the other hand, if $d\geq 2$, then if $H^j_{x_0}(V,\mathbb{C})$ denotes the $j$-th singular cohomology group of $V$ with coefficients in $\mathbb{C}$ and support in $x_0$, the following hold.
\begin{enumerate}
\item $\lambda_{0,j}(R) = \Dim_{\CC} H^j_{x_0}(V,\mathbb{C})$ if $1 \leq j \leq d -1$, 
\item $\lambda_{i,d}(R) = \Dim_{\CC} H^{i+d}_{x_0}(V,\mathbb{C})$ if $2 \leq i \leq d$, and
\item All other $\lambda_{i,j}(R) = 0$.
\end{enumerate}
\end{theorem}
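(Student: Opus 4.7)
The plan is to translate the $D$-module computation defining $\lambda_{i,j}(R)$ into a topological one via the Riemann--Hilbert correspondence, and then identify the resulting cohomology with singular cohomology using the comparison theorems of Ogus and Hartshorne. Since Lyubeznik numbers are completion-invariant, we may write $\widehat{R} = S/I$ with $S = \CC\llbracket x_1, \ldots, x_n \rrbracket$, so that $\lambda_{i,j}(R) = \Dim_\CC \Ext^i_S(\CC, H^{n-j}_I(S))$ and each $H^{n-j}_I(S)$ is a regular holonomic $D(S,\CC)$-module of finite length.

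The critical structural input is that $V$ is smooth away from $x_0$: on the punctured neighborhood $\Spec(S) \setminus \{x_0\}$, the sheaf associated to $H^{n-j}_I(S)$ vanishes for $j \neq d$ and is (essentially) the structure sheaf $\cO_{V \setminus \{x_0\}}$ for $j = d$. Hence, for every $j \neq d$, $H^{n-j}_I(S)$ is supported at $\{x_0\}$; by Kashiwara's equivalence (compare Properties \ref{FunctorProperties}), every regular holonomic $D(S,\CC)$-module supported at $\{x_0\}$ is a direct sum of copies of the injective hull $E_S(\CC)$, and $\Ext^i_S(\CC, E_S(\CC))$ vanishes for $i>0$. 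This immediately yields all the vanishing statements in the theorem, and reduces the problem to computing $\lambda_{0,j}(R)$ for $1 \leq j \leq d-1$ (counting punctual multiplicities) and $\lambda_{i,d}(R)$ for $2 \leq i \leq d$ (which must come from the generic summand of $H^{n-d}_I(S)$, not from any possible punctual summand).

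Next, the plan is to apply the Riemann--Hilbert correspondence: the family $\{H^{n-j}_I(S)\}_j$ of regular holonomic $D$-modules assembles, up to a perversity shift, into the constructible complex $\mathrm{R}\Gamma_V(\CC_X)$ on (the analytic germ of) $X = \Spec(S)$. Under this equivalence, the functor $\mathrm{RHom}_S(\CC, -)$ translates, via holonomic duality, to local cohomology with support at $\{x_0\}$ of the associated constructible complex, shifted by an appropriate integer. Combining this with Ogus's identification of $D$-module local cohomology with algebraic de Rham cohomology and Hartshorne's comparison of algebraic de Rham cohomology with singular cohomology yields isomorphisms of the desired form; tracking the degrees then gives
$$\lambda_{0,j}(R) = \Dim_\CC H^j_{\{x_0\}}(V, \CC) \text{ for } 1 \leq j \leq d-1, \quad \lambda_{i,d}(R) = \Dim_\CC H^{i+d}_{\{x_0\}}(V, \CC) \text{ for } 2 \leq i \leq d.$$

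The hard part will be the careful bookkeeping of the several shifts --- the $n-j$ indexing of local cohomology, the perversity shift from Riemann--Hilbert, and the holonomic duality shift --- to match the cohomological degrees on the nose, and to cleanly separate the punctual summand of $H^{n-d}_I(S)$ so that it affects only $\lambda_{0,d}$ rather than contaminating the higher $\lambda_{i,d}(R)$. The low-dimensional case $d = 1$ should be treated separately, since $V$ is then already smooth away from $x_0$ and the statement reduces to the analytic normality portion of Properties \ref{FirstProp}.
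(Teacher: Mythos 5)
The survey does not actually prove Theorem \ref{GS}: it is quoted from \cite{GarciaSabbah}, with the text only recording that the proof there uses the Riemann--Hilbert correspondence and duality for holonomic $D$-modules, and that part (2) is deduced from part (1) by local Poincar\'e duality. Your outline is consistent with that strategy, and your first reduction is the right one: for $j\neq d$ the module $H^{n-j}_I(S)$ is supported at the closed point (this uses that $V\setminus\{x_0\}$ is smooth of \emph{pure} dimension $d$, as in the introduction's phrasing of the hypothesis; for non-pure $V$ several $H^{n-j}_I(S)$ are non-punctual and the statement fails), hence by Kashiwara's equivalence it is a finite direct sum of copies of $E_S(\CC)$, so only $\lambda_{0,j}$ can survive for such $j$.

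There are, however, concrete gaps. First, the punctual-support argument does \emph{not} ``immediately yield all the vanishing statements'': item (3) includes $\lambda_{0,d}(R)=\lambda_{1,d}(R)=0$, and these concern precisely the one module $H^{n-d}_I(S)$ that is not supported at $x_0$. Their vanishing amounts to $\Gamma_{\fm}\bigl(H^{n-d}_I(S)\bigr)=0$ and $\Ext^1_S\bigl(\CC,H^{n-d}_I(S)\bigr)=0$, which is genuine content of \cite{GarciaSabbah} and must come out of the same Riemann--Hilbert/duality computation you defer; it is not a formality, and your own remark about ``cleanly separating the punctual summand'' of $H^{n-d}_I(S)$ quietly presupposes it. Second, for $d=1$ you cannot fall back on the analytic-normality clause of Properties \ref{FirstProp}: a curve germ with an isolated singularity (a node, a cusp) is generally not analytically normal. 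The correct source for $\lambda_{1,1}(R)=1$ is Theorem \ref{main theorem in section of highestNumber} (the Hochster--Huneke graph of a one-dimensional complete local ring is always connected), together with the separate fact that $\lambda_{0,1}(R)=0$, i.e., that $H^{n-1}_I(S)$ has zero socle. Finally, essentially all of the quantitative content of the theorem sits inside the ``bookkeeping'' you postpone; as the survey indicates, the intended route to part (2) is not a direct computation of $\lambda_{i,d}$ but rather part (1) together with the duality $\lambda_{i,d}(R)=\lambda_{0,d-i+1}(R)$ for $2\le i<d$ and $\lambda_{d,d}(R)=\lambda_{0,1}(R)+1$, followed by Poincar\'e--Lefschetz duality on the link of the singularity.
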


\edit{In fact, part (2) of Theorem \ref{GS} is deduced from part (1) via Poincar\'e duality, as illustrated by its Lyubeznik table:}
\edit{
\[\Lambda(R) = \begin{pmatrix}
0 &  \lambda_{0,1}(R)  & \lambda_{0,2}(R) & \cdots & \lambda_{0, d-1}(R) & 0 \\
0 & 0 & 0 & \cdots & 0 & 0 \\
0 & 0 & 0 & \cdots & 0 & 0 \\
0 & 0 & 0 & \cdots & 0 & \lambda_{0,d-1}(R) \\
0 & 0 &  0 & \cdots & 0 & \lambda_{0,d-2}(R) \\
\vdots & \vdots & \vdots & \ddots & \vdots & \vdots \\
0 & 0 & 0 & \cdots & 0 & \lambda_{0,2}(R) \\
0 & 0 & 0 & \cdots & 0 & \lambda_{0,1}(R) + 1 
\end{pmatrix}. \]
}

%\tiny{\[\xymatrix{
%\lambda_{0,0} & \lambda_{0,1} & \lambda_{0,2}\ar@{=}[ddddrrr] & \lambda_{0,3}\ar@{=}[dddrr] & \cdots & \lambda_{0,d} \\
% & & & & & \vdots\\
% & & & & & \vdots\\
% & & & & & \lambda_{d-2,d}\\
% & & & & & \lambda_{d-1,d}\\
% & & & & & \lambda_{d,d}
%}\]}
%}

\edit{Blickle and Bondu have found interpretations of Lyubeznik numbers in terms of \'etale cohomology \cite{B-B}, which is closely related to Theorem \ref{GS}. Results in \cite{GarciaSabbah} and \cite{B-B} have been generalized in \cite{BlickleLyuCIS}.   
%For $(A, \fm)$ a local ring, let $F: A \to A$ denote the Frobenius endomorphism on $A$ \eqref{Frob}, and let $H^\bullet_\fm(A) = \bigoplus_{j\in \NN} H^j_\fm(A)$.  Defining $0^\F := \{ u \in H^\bullet_\fm(A)  \mid F^e (u) = 0 \text{ for some } e \in \NN \}$, $A$ is called \emph{close to $F$-rational} if  $H^\bullet_\fm(A)/ 0^\F$ is a simple $A[F]$-module \cite{B-B}.  
 The result is as follows.

\begin{theorem}
Let $K$ be a separably closed field, and given a closed $d$-dimensional $K$-subvariety $Y$ of a smooth $n$-dimensional variety $X$, let 
let $A=\mathcal{O}_{Y,x}$. 
Suppose that the modules $H^{n-i}_{[Y]}(\mathcal{O}_X)$ are supported in the point $x$ for all $i\neq d$. Then the following hold.
\begin{enumerate}
\item For $2\leq a \leq d$, one has that 
\begin{enumerate}
\item $\lambda_{a,d}(A)  =\lambda_{0,d-a+1}(A)$ for $a\neq d$, 
\item $\lambda_{d,d}(A) = \lambda_{0,1}(A)+1$, and 
\item All other $\lambda_{a,i}(A)=0$.
\end{enumerate}
%\begin{align}
%\lambda_{a,d}(A) & =\lambda_{0,d-a+1}(A)\ for\ a\neq d\notag\\
%\lambda_{d,d}(A) &= \lambda_{0,1}(A)+1\notag
%\end{align}
%and all other $\lambda_{a,i}(A)=0$.
\item If $\delta_{a,d}$ denotes the Kroneker delta function, then
\[\lambda_{a,d}(R) -\delta_{a,d} = \begin{cases}\dim_{\mathbb{F}_p} H^{d-a+1}_{\{x\}}(Y_{\text{\'et}}, \mathbb{F}_p) & if\ K \ \text{has characteristic} \ p, \ \text{and}\\ \dim_{\mathbb{C}} H^{d-a+1}_{\{x\}}(Y_{\text{an}},\mathbb{C}) & if\ K=\mathbb{C}. \end{cases}\] 
\end{enumerate}
\end{theorem}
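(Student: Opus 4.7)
My plan is to first restrict the structure of $H^j_I(S)$ for $j\neq n-d$ using a Kashiwara-type argument, then run two Grothendieck composition spectral sequences to pin down the remaining Lyubeznik numbers, and finally substitute the known identifications of the $\lambda_{0,j}$'s with \'etale or singular cohomology. After completion I may replace $A$ by $S/I$, where $(S,\fm,K)$ is a regular local ring of dimension $n$, so that the hypothesis reads: $H^j_I(S)$ is supported at $\fm$ whenever $j\neq n-d$.

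The first step is to argue that each $H^j_I(S)$ with $j\neq n-d$ is a \emph{finite} direct sum of copies of $E_S(K)$. In characteristic $p>0$ this follows from Lyubeznik's $F$-module theory, since an $F$-finite $F$-module supported at $\fm$ is a finite direct sum of copies of the unique simple such $F$-module, $E_S(K)$. In characteristic zero it follows from Kashiwara's theorem applied to the finitely generated $D(S,K)$-module $H^j_I(S)$: its only simple subquotient supported at $\fm$ is again $E_S(K)$. Consequently $\Ext^a_S(K,H^j_I(S))=0$ for all $a>0$ and $j\neq n-d$, which gives the vanishing part of (1)(c) in every column $i\neq d$; moreover $\lambda_{0,i}(A)$ is identified with the multiplicity of $E_S(K)$ in $H^{n-i}_I(S)$.

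Setting $N:=H^{n-d}_I(S)$, I would next analyze the Grothendieck spectral sequence
\[E_2^{p,q}=H^p_{\fm}(H^q_I(S))\Rightarrow H^{p+q}_{\fm}(S),\]
whose abutment is $E_S(K)$ in total degree $n$ and zero elsewhere. The hypothesis concentrates the $E_2$-page in row $p=0$ together with the column $q=n-d$, so the only potentially non-trivial higher differentials are the staircase maps $d_r\colon H^{n-d+r-1}_I(S)\to H^r_{\fm}(N)$ for $r\geq 2$. Forcing convergence off the antidiagonal $p+q=n$ yields: $d_r$ is an isomorphism for $2\leq r\leq d-1$ (so $H^r_{\fm}(N)\cong E_S(K)^{\oplus \lambda_{0,d-r+1}(A)}$); the low-degree modules $H^0_{\fm}(N)$ and $H^1_{\fm}(N)$ vanish for $d\geq 2$; and there is a short exact sequence $0\to H^{n-1}_I(S)\to H^d_{\fm}(N)\to E_S(K)\to 0$ (using $H^n_I(S)=0$ by Hartshorne-Lichtenbaum) that splits by injectivity of the ends, giving $H^d_{\fm}(N)\cong E_S(K)^{\oplus(\lambda_{0,1}(A)+1)}$. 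Feeding these descriptions into a second Grothendieck spectral sequence $\Ext^p_S(K,H^q_{\fm}(N))\Rightarrow \Ext^{p+q}_S(K,N)$ produces an $E_2$-page concentrated in row $p=0$, which collapses and lets me read Part~(1) directly off the socles $\Hom_S(K,H^q_{\fm}(N))$.

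Part (2) is then immediate after substituting the identifications $\lambda_{0,j}(A)=\dim_{\mathbb{F}_p}H^j_{\{x\}}(Y_{\text{\'et}},\mathbb{F}_p)$ in characteristic $p$ (from \cite{B-B}) or $\lambda_{0,j}(A)=\dim_{\CC}H^j_{\{x\}}(Y_{\text{an}},\CC)$ over $\CC$ (from \cite{GarciaSabbah}), applied with $j=d-a+1$, into the formulas of Part (1). The main obstacle I expect is the first step: invoking the Kashiwara / $F$-module dichotomy uniformly in both characteristics, and in particular ensuring the \emph{finiteness} of the direct sum --- which rests on $H^j_I(S)$ having finite length in the relevant category of $D$-modules or $F$-modules. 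A secondary subtlety is bookkeeping the filtration direction in the first spectral sequence precisely enough that the ``$+1$'' in $\lambda_{d,d}(A)=\lambda_{0,1}(A)+1$ emerges naturally from the single surviving copy of $H^n_{\fm}(S)=E_S(K)$, rather than being inserted by hand.
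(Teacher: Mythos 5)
This survey only quotes the theorem from \cite{BlickleLyuCIS} and gives no proof, so there is nothing in the paper itself to compare against; judged on its own, your proposal is correct and in fact reconstructs essentially the argument of the cited source. The two pillars are exactly right: (i) the Kashiwara/$F$-module dichotomy forces each $H^j_I(S)$ with $j\neq n-d$ to be a finite direct sum of copies of $E_S(K)$ (finiteness coming from holonomicity, resp.\ $F$-finiteness), which kills $\lambda_{a,i}$ for $a>0$, $i\neq d$; and (ii) the spectral sequence $H^p_{\fm}(H^q_I(S))\Rightarrow H^{p+q}_{\fm}(S)$, whose staircase differentials $d_r\colon H^{n-d+r-1}_I(S)\to H^r_{\fm}(N)$ must be isomorphisms for $2\leq r\leq d-1$ and injective with cokernel $E_S(K)$ for $r=d$ (Hartshorne--Lichtenbaum giving $H^n_I(S)=0$), after which $\lambda_{a,d}(A)=\dim_K\Hom_S(K,H^a_{\fm}(N))$ reads off Part (1). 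The only point worth making explicit is that Part (2) is not quite a free substitution: the identification $\lambda_{0,j}(A)=\dim H^j_{\{x\}}(Y_{\text{\'et}},\FF_p)$ (resp.\ $\dim H^j_x(Y_{\text{an}},\CC)$) must itself be available under the \emph{cohomologically} isolated hypothesis, not just for genuinely isolated singularities; in characteristic $p$ this is precisely the content of \cite{B-B}, and over $\CC$ one needs the Ogus/Garc\'ia L\'opez--Sabbah identification extended to this setting, which is part of what \cite{BlickleLyuCIS} supplies. With that caveat acknowledged, the outline is sound.
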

}
%\noindent In fact, \cite{B-B} establishes a generalization of Theorem \ref{GS}; the isolated singularity assumption is weakened to require only that $(V \setminus x_0)$ is an \emph{intersection cohomology manifold}.}

\section{Generalized Lyubeznik numbers} \label{generalized}

The \emph{generalized Lyubeznik numbers} are a family of invariants associated to a local ring containing a field, which contains the Lyubeznik numbers.  These invariants can capture more subtle information about the ring than the (original) Lyubeznik numbers can. 
Moreover, they can encode information about $F$-singularities in characteristic $p>0$, and have connections to $D$-module characteristic cycle multiplicities in characteristic zero.
Throughout our discussion, we use \cite{NuWiEqual} as our reference. 
The generalized Lyubeznik numbers are defined as follows.  

\begin{ThmDef}[Generalized Lyubeznik numbers] \label{GenDef}
Let $(R, \fm, K)$ be a local ring containing a field  and let $\widehat{R}$ be its completion at $\fm$.   
Given a coefficient field  $L$ of $\widehat{R}$, there exists a surjection $\pi: S \to \widehat{R}$, where $S = K \llbracket x_1, \ldots, x_n \rrbracket $ for some $n \geq 1$, and such that $\pi(K) = L$.
For $1 \leq j \leq \ell$, fix  $i_j \in \NN$ and ideals $I_j \subseteq R$, and let $J_j = \pi^{-1} \left( I_j \widehat{R} \right) \subseteq S$.
The \emph{generalized Lyubeznik number of $R$ with respect to $L$, the $I_j$, and the $i_j$} is defined as  
 $$\lambda^{i_\ell,\ldots, i_1}_{I_\ell,\ldots, I_1}(R;L):=\Length_{D(S,K)} H^{i_\ell}_{J_\ell}  \cdots  H^{i_2}_{J_2}H^{n-i_1}_{J_1}(S).$$
This number is finite and depends only on $R,$ $L$, the $I_j$, and the $i_j$ (\emph{i.e.}, it is independent of the choice of $S$ and of $\pi$).

When $\widehat{R}$ has a unique coefficient field (\emph{e.g}., when $K$ is a perfect field of characteristic $p>0$), or when the choice of $L$ is clear in the context, this invariant is denoted $\lambda^{i_\ell,\ldots, i_1}_{I_\ell,\ldots, I_1}(R)$.
\end{ThmDef}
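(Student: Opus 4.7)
The plan is to parallel the sketch of Theorem/Definition \ref{LyuDef} for the classical Lyubeznik numbers, with Properties \ref{FunctorProperties}($2'$) playing the role that Properties \ref{FunctorProperties}($7$) played there. Finiteness of
\[\Length_{D(S,K)}\bigl(H^{i_\ell}_{J_\ell}\cdots H^{i_2}_{J_2}H^{n-i_1}_{J_1}(S)\bigr)\]
is already available: as recalled in Section \ref{Sec D-modules}, Bj\"ork's result in characteristic zero, extended to positive characteristic via Lyubeznik's $F$-module theory, shows that every such iterated local cohomology module has finite length over $D(S,K)$. The content is therefore the independence of the invariant from the presentation $\pi: S\surj \widehat{R}$ with $\pi(K)=L$.

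As in the sketch of Theorem/Definition \ref{LyuDef}, I would take two such presentations $\pi: S = K\llbracket x_1,\ldots,x_n\rrbracket \surj \widehat{R}$ and $\pi': S' = K\llbracket y_1,\ldots,y_{n'}\rrbracket \surj \widehat{R}$, both restricting to the chosen isomorphism $K \xrightarrow{\sim} L$ on coefficient fields, and compare them through a common third presentation. Let $S''= K\llbracket z_1,\ldots,z_{n+n'}\rrbracket$, and define $\pi'':S''\surj \widehat{R}$ by $\pi''(z_j)=\pi(x_j)$ for $j\leq n$ and $\pi''(z_{n+j})=\pi'(y_j)$ for $1\leq j\leq n'$. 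For each $j$, choose a lift $\tilde{y}_j\in S$ with $\pi(\tilde{y}_j)=\pi'(y_j)$, and perform the change of variables $w_{n+j}:=z_{n+j}-\tilde{y}_j$. Under this, $S''\cong S\llbracket w_{n+1},\ldots,w_{n+n'}\rrbracket$, the surjection $\pi''$ factors as $\pi\circ\sigma$ where $\sigma: S''\surj S$ sends $w_{n+j}\mapsto 0$, and $J''_j := (\pi'')^{-1}(I_j\widehat{R}) = \sigma^{-1}(J_j) = (J_j,w_{n+1},\ldots,w_{n+n'})S''$.

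With this presentation of $S''$ in hand, I would iterate the functor of Section \ref{Key Functor} one variable at a time, writing $G_k$ for the functor from $S_{k-1}$-modules to $S_k$-modules with $S_k=S\llbracket w_{n+1},\ldots,w_{n+k}\rrbracket$. A direct induction using Properties \ref{FunctorProperties}(6), noting that each $G_k$ adjoins the variable $w_{n+k}$ to every ideal and raises \emph{only} the innermost cohomology index by one, yields
\[(G_{n'}\circ\cdots\circ G_1)\bigl(H^{i_\ell}_{J_\ell}\cdots H^{i_2}_{J_2}H^{n-i_1}_{J_1}(S)\bigr) \;\cong\; H^{i_\ell}_{J''_\ell}\cdots H^{i_2}_{J''_2}H^{n+n'-i_1}_{J''_1}(S'').\]
Iterating Properties \ref{FunctorProperties}($2'$) then gives
\[\Length_{D(S,K)}H^{i_\ell}_{J_\ell}\cdots H^{n-i_1}_{J_1}(S) \;=\; \Length_{D(S'',K)}H^{i_\ell}_{J''_\ell}\cdots H^{n+n'-i_1}_{J''_1}(S''),\]
and the symmetric argument applied to $(S',\pi')$ versus $(S'',\pi'')$ produces the equality with $S'$ in place of $S$. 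The main obstacle I anticipate is purely one of bookkeeping: verifying that the lifts $\tilde{y}_j$ really do convert $J''_j$ into $(J_j,w_{n+1},\ldots,w_{n+n'})S''$ so that Properties \ref{FunctorProperties}(6) can be invoked with precisely the ideals appearing in the definition, and checking that the chosen $\pi''$ restricts to the same coefficient field map $K \xrightarrow{\sim} L$ as $\pi$ and $\pi'$, so that all three length computations are with respect to the same $L$.
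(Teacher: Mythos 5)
Your proposal is correct and follows essentially the same route as the paper's sketch: reduce to the complete case, pass to the common presentation $S''$, use a change of variables (the paper's $y_j-\sigma_j$ are your $w_{n+j}$) to realize $S''$ as a power series ring over $S$ with $J_j''=(J_j,w_{n+1},\ldots,w_{n+n'})S''$, and then apply Properties \ref{FunctorProperties}($6$) and ($2'$). The only cosmetic difference is that you make the one-variable-at-a-time iteration of the functor $G$ explicit, which the paper leaves implicit.
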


In  the definition, we may take $I_1\subseteq \ldots \subseteq I_\ell$ without losing information:   For any $R$-module $M$ satisfying $H^0_I(M)=M$ for some ideal $I$ of $R$, then for every ideal $J$ of $S$, $H^i_{J}(M)=H^i_{I+J}(M)$.
Moreover, $\lambda^{i_\ell,\ldots, i_1}_{I_\ell,\ldots, I_1}(R, L)=\lambda^{i_\ell,\ldots, i_1}_{I_\ell,\ldots,I_2, 0}(R/I_1, L).$

We note that \`Alvarez Montaner has also given a generalization of the Lyubeznik numbers in the characteristic zero setting in \cite{AM-NumInv}; his invariants are defined in terms of $D$-module characteristic cycles. 

{\edit{

In general,  to avoid the dependence on the choice of coefficient field of $\widehat{R}$ in the definition of generalized Lyubeznik numbers, one would need to answer the following question, asked by Lyubeznik.

\begin{question}[\cite{LyuSurveyLC}] \label{LyuQuestion}
Let $S$ be a complete regular local ring of equal characteristic.  For $1 \leq j \leq \ell$, fix $i_j \in \NN$ and ideals $J_j \subseteq S$.
Given any two coefficient fields $K$ and $L$ of $S$, is
$$\Length_{D(S,K)} H^{i_\ell}_{J_\ell}  \cdots  
H^{i_2}_{J_2}H^{n-i_1}_{J_1}(S)=
\Length_{D(S,L)} H^{i_\ell}_{J_\ell}  \cdots  
H^{i_2}_{J_2}H^{n-i_1}_{J_1}(S) \text{?}$$
\end{question} 
\noindent To the best of our knowledge, this question is open even in the case that $s=1$.
}

As their nomenclature indicates, the generalized Lyubeznik numbers are, in fact, generalizations of the Lyubeznik numbers, as the following proposition makes explicit.
\begin{proposition}
If $(R, \fm, K)$ is a local ring containing a field, then $\lambda_{i,j} (R)=\lambda^{i,j}_{\fm,0} (R;L)$
for any coefficient field $L$ of $\widehat{R}.$
\end{proposition}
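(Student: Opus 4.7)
The plan is to reduce to the complete case, identify both invariants as invariants of the $D(S,K)$-module $M = H^{n-j}_I(S)$, and then combine an iterated Kashiwara-type equivalence with a short spectral-sequence argument.

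First I would note that both $\lambda_{i,j}(R)$ and $\lambda^{i,j}_{\fm,0}(R;L)$ depend only on $\widehat{R}$, so I may assume $R$ is complete with a surjection $\pi\colon S \surj R$ from $S = K\llbracket x_1,\dots,x_n\rrbracket$. Writing $\fn$ for the maximal ideal of $S$, $I = \Ker \pi$, and $M = H^{n-j}_I(S)$, the ideals attached to $I_1 = 0$ and $I_2 = \fm$ in Theorem/Definition \ref{GenDef} are $J_1 = I$ and $J_2 = \fn$, so the equality to prove becomes
\[ \Dim_K \Ext^i_S(K, M) \;=\; \LengthD H^i_\fn(M). \]

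Next, I would iterate items (3), (1$'$), and (2$'$) of Properties \ref{FunctorProperties} across the $n$ variables of $S$. Starting from $R_0 = K$ and successively passing to $R_{k+1} = R_k\llbracket x_{k+1}\rrbracket$, this yields an equivalence between the category of $K$-vector spaces (i.e., $D(K,K)$-modules) and the category of $D(S,K)$-modules supported at $\{\fn\}$. A repeated application of item (5) identifies the image of $K$ under the equivalence with $H^n_\fn(S) = E_S(K)$, and item (2$'$) guarantees that the equivalence preserves length; the inverse functor is $N \mapsto \Ann_N(\fn) = \Hom_S(K,N)$. Consequently, every $D(S,K)$-module $N$ of finite $D$-length supported at $\{\fn\}$ satisfies
\[ N \;\cong\; E_S(K)^{\ell}, \qquad \ell \;=\; \LengthD N \;=\; \Dim_K \Hom_S(K, N). \]
Since $H^i_\fn(M)$ is a $D(S,K)$-module supported at $\{\fn\}$ of finite $D$-length (per the well-definedness of the generalized Lyubeznik numbers), this yields $\LengthD H^i_\fn(M) = \Dim_K \Hom_S(K, H^i_\fn(M))$.

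It remains to identify $\Hom_S(K, H^i_\fn(M))$ with $\Ext^i_S(K, M)$. Applying the previous paragraph to each $H^q_\fn(M)$ shows that these are direct sums of copies of $E_S(K)$, hence injective $S$-modules. Since $K$ is $\fn$-torsion we have $\Hom_S(K,-) = \Hom_S(K, \Gamma_\fn(-))$, and $\Gamma_\fn$ sends injective $S$-modules to injective ones; Grothendieck's spectral sequence for the composition therefore reads
\[ E_2^{p,q} \;=\; \Ext^p_S(K, H^q_\fn(M)) \;\Longrightarrow\; \Ext^{p+q}_S(K, M). \]
Injectivity of $H^q_\fn(M)$ forces $E_2^{p,q} = 0$ for $p \geq 1$, so the sequence collapses on the row $p=0$ and gives $\Ext^i_S(K,M) \cong \Hom_S(K, H^i_\fn(M))$, completing the proof. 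The hard part will be the iterated Kashiwara equivalence in the second paragraph: one has to assemble items (3), (1$'$), and (2$'$) of Properties \ref{FunctorProperties} coherently across all $n$ variables, track that the simple object $K$ on the vector-space side maps to $E_S(K)$ on the $D(S,K)$-side, and confirm that $D$-length transports to $K$-dimension under the inverse functor $\Hom_S(K,-)$. Once this structural fact is in hand, the rest is routine homological algebra.
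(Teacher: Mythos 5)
Your proof is correct and follows essentially the same route as the paper's: reduce to the complete case, use injectivity of $H^i_\fn H^{n-j}_I(S)$ to identify $\Ext^i_S\left(K,H^{n-j}_I(S)\right)$ with $\Hom_S\left(K,H^i_\fn H^{n-j}_I(S)\right)$, and then use that a finite-$D$-length module supported at $\fn$ is $E_S(K)^{\oplus\ell}$ with $E_S(K)$ simple over the ring of differential operators. The only difference is that where the paper cites Lyubeznik's Corollary 3.6 (injectivity) and Lemma 1.4 (the $\Hom$--$\Ext$ identification), you rederive both from the iterated Kashiwara equivalence of Properties \ref{FunctorProperties} and the composite-functor spectral sequence, which is a legitimate self-contained substitute.
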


\begin{proof}[Sketch of proof]  Since the Bass numbers with respect to the maximal ideal are not affected by completion, $R$ is, without loss of generality, complete.  

Consider a surjection $\pi: S:= K \llbracket x_1, \ldots, x_n \rrbracket  \surj R$, where $\pi(K)=L$.  If $\fn$ denotes the maximal ideal of $S$, then $H^i_{\fn} H^{n-j}_{I} (S)$ is injective by \cite[Corollary 3.6]{LyubeznikFinitenessLocalCohomologyModules}.  
Thus, by  \cite[Lemma 1.4]{LyubeznikFinitenessLocalCohomologyModules}, $\dim_K \Hom(K, H^i_{\fn} H^{n-j}_I(S) ) = \dim_K \Ext^i_S(K, H^{n-j}_I(S))$, which, in turn, equals $\lambda_{i,j}(R).$  As $H^i_{\fn} H^{n-j}_{I} (S)$ is supported at $\fm$, it is isomorphic to $E_S(K)^{\oplus \ell}\cong E_S(L)^{\oplus \ell}$ for some $\ell\in\NN$.  Since $E_S(L)$ is a simple $D(S,L)$-module, we have that $\dim_K \Hom(K, H^i_{\fn} H^{n-j}_I(S) ) = \ell = \Length_{D(S,L)} H^i_{\fn} H^{n-j}_I(S) = \lambda_{\fm, 0}^{i,j}(R; L).$ 
\end{proof}

The proof that the generalized Lyubeznik numbers are well defined critically uses the key functor $G$ defined in Subsection \ref{Key Functor}; we now sketch this proof. 

\begin{proof}[Sketch of proof of Theorem/Definition \ref{GenDef}]
{\edit{
First, note that the $D(S,K)$-module length  of the iterated local cohomology modules is finite by \cite{BjorkRingsDifferentialOperators,LyubeznikFinitenessLocalCohomologyModules,LyubeznikFModulesApplicationsToLocalCohomology}.
}}
Again, without loss of generality, we may assume that $R$ is complete.
Along with the surjection $\pi: S=K \llbracket x_1, \ldots, x_n \rrbracket  \surj R$, 
fix $S' = K \llbracket y_1, \ldots, y_{n'} \rrbracket$ and 
take a surjection $\pi' : S' \surj R$ satisfying $\pi'(K) = L$.
Set $I' = \Ker(\pi')$, and let $\fm'$ be the maximal ideal of $S'$.
 Let ${J_{j}}' = {(\pi')}^{-1}(I_j)$ for $1 \leq j \leq s$.
 
Let $S''=K \llbracket x_1,\ldots,x_n, y_1, \ldots, y_{n'} \rrbracket $ and 
let $\pi''$ denote the surjection $\pi'': S''\surj R$ defined by
\begin{alignat*}{3}
&x_i \mapsto \pi(x_i) & \ \ & \text{for } 1\leq i\leq n, \text{ and} \\
&y_j \mapsto \pi'(y_j) & & \text{for } 1\leq j\leq n'.
\end{alignat*}
Let ${J_j}'' = (\pi'')^{-1}(I_j)$ for each $1 \leq j \leq s$. 

Since $\pi$ is surjective, there exist $\sigma_j \in S$ such that $\pi(\sigma_j) = \pi''(y_j)$ for each $1 \leq j \leq n'$.  
Let $\varphi: S'' \to S$ denote the map defined by $\varphi(x_i) = x_i$ for $1 \leq i \leq n$ and $\varphi(y_j) = \sigma_j$ for $1 \leq j \leq n'$, so that
$\varphi$ splits the inclusion $S \inj S''$.
Since for $1 \leq j \leq n'$, $y_1 - \sigma_1 \in \Ker \varphi$, and the map $S''/(y_1 - \sigma_1, \ldots, y_{n'}-\sigma_{n'}) \to S$ induced by $\varphi$ is an isomorphism, $\Ker(\varphi) = (y_1 - \sigma_1, \ldots, y_{n'}-\sigma_{n'}).$
Thus, ${J_j}'' = J_j + (y_1 - \sigma_1, \ldots, y_{n'}-\sigma_{n'})$.  Moreover, the elements $x_1, \ldots, x_n, y_1 - \sigma_1, \ldots, y_{n'}-\sigma_{n'}$ form a regular system of parameters for $S''$, by Properties \ref{FunctorProperties} {\edit{($6$) and ($2'$)}}
%(6,8)
%Let $I''$ be the preimage of $I$ under $\pi''$, respectively. 
of the functor $G$ defined in Subsection \ref{Key Functor},
we obtain that
\[  \Length_{D(S,{K})} H^{i_\ell}_{J_\ell}  \cdots  H^{i_2}_{J_2}  H^{n-i_1}_{J_1}(S) = \Length_{D(S'',{K})} H^{i_\ell}_{J_\ell''}   \cdots H^{i_2}_{J_2''} H^{n'+n-i_1}_{J_1''}(S''),
\] which, by an analogous argument, equals $\Length_{D(S',{K})} H^{i_\ell}_{J_\ell'}  \cdots  H^{i_2}_{J_2'}  H^{n'-i_1}_{J_1'}(S')$ as well.
\end{proof}

Some vanishing properties of the generalized Lyubeznik numbers are as follows; \emph{cf.} Properties \ref{FirstProp}.

\begin{properties}
Given any local ring $(R, \fm, K)$ containing a field, ideals $I_1 \subseteq \ldots, \subseteq I_\ell$ of $R$, $i_j \in \NN$ for $1 \leq j \leq \ell$, and a coefficient field $L$ of $\widehat{R}$, the following hold.
\begin{enumerate}
\item $\lambda^{i_1}_{I_1}(R; L)\neq 0$ if $i_1=\dim(R/I_1)$.
\item  $\lambda^{i_2,i_1}_{I_2,I_1}(R; L)=0$ if $i_2>i_1$, and is nonzero if $i_1=\dim(R/I_1)$ and $i_2=\dim(R/I_1)-\dim(R/I_2)$.
\item  $\lambda^{i_\ell,\ldots, i_1}_{I_\ell,\ldots, I_1}(R; L)=0$ if either $i_1>\dim(R/I_1)$, or if $i_j>\dim(R/I_{j-1})$ and $2 \leq j\leq \ell$.
\end{enumerate}
\end{properties}

The generalized Lyubeznik numbers behave in the following way under finite field extensions: 
if $K\subseteq K'$ is a finite field extension, $R=K \llbracket x_1,\ldots,x_n \rrbracket ,$ and $R'=K' \llbracket x_1,\ldots,x_n \rrbracket ,$ then for all ideals $I_j$ of $R$ and $i_j \in \NN$, $1 \leq j \leq \ell$, $\lambda^{i_\ell,\ldots, i_1}_{I_\ell,\ldots, I_1}(R)= \lambda^{i_\ell,\ldots, i_1}_{I_\ell R',\ldots, I_1 R'}(R').$

Although the Lyubeznik numbers cannot distinguish between complete intersection rings (see Properties \ref{FirstProp}), or between one-dimensional rings, the generalized Lyubeznik numbers can, as the following properties show.

\begin{properties} \label{exsGen} The following hold.  
\begin{enumerate}
\item Fix a complete local ring $(R, \fm, K)$ of dimension one and a coefficient field $L$ of $R$, and let $P_1,\ldots P_t$ denote the minimal primes of $R$.  Then 
$\lambda^{1}_{0} (R; L)=\lambda^{1}_{0} (R/P_1; L)+\ldots +\lambda^{1}_{0} (R/P_t ; L)+t-1.$
\item If $K$ is a field and $S=K \llbracket x_1,\ldots, x_n \rrbracket $, 
let $f=f^{\alpha_1}_1\cdots f^{\alpha_t}_t$, where $f_1,\ldots ,f_t\in S$ are irreducible and each $\alpha_i \in \NN$. Then
$\lambda^{n-1}_{0} (S/f)\geq\lambda^{n-1}_{0} (S/f_1)+\ldots +\lambda^{n-1}_{0} (S/f_t)+t-1.$
\end{enumerate}
\end{properties}

Proposition \ref{exsGen} indicates that the generalized Lyubeznik numbers of the form $\lambda_0^j(R; L)$ are of specific interest.  One way they can be used is to define the following invariant of a local ring containing a field.

\begin{definition} [Lyubeznik characteristic]
Let $R$ be a $d$-dimensional local ring containing a field, and fix  a coefficient field $L$ of $\widehat{R}$.  The \emph{Lyubeznik characteristic of $R$ with respect to $L$} is defined as \[ \chi_{\lambda}(R; L) := \sum_{i=0}^d (-1)^i \lambda_{0}^i(R; L).\]  
%When $L$ is clear in the context, we use the notation $\chi_{\lambda}(R)$.
\end{definition}

As a consequence of the Mayer-Vietoris sequence for local cohomology (see Remark \ref{MVSequence}), %we have the following.
%\begin{proposition}
given ideals $I$ and $J$ of a local ring $R$ containing a field, 
$\chi_{\lambda}(R/I; L) + \chi_{\lambda}(R/J;  L) = \chi_{\lambda}(R/(I+J); L) + \chi_{\lambda}(R/I\cap J; L)$
for any coefficient field $L$ of $\widehat{R}$.
%\end{proposition}
Note that the Lyubeznik characteristic of a Stanley-Reinser ring is computed in Theorem \ref{LyuCharSR}.

\edit{
We now discuss some connections of certain generalized Lyubeznik numbers with singularities in  positive characteristic.
Suppose that $R$ is a ring of characteristic $p>0$.
In this case, we have the Frobenius endomorphism $F:R\to R$ defined by $F(r)= r^p$, as discussed in Section \ref{ProjSchemesSec}.
A variety of types of singularities can be defined via Frobenius (\emph{e.g.}, $F$-\emph{injective}, $F$-\emph{pure}, and $F$-\emph{regular} singularities).

If $R$ is reduced, let $R^{1/p^e}=\{r^{1/p^e}\mid r\in R\}$, the ring obtained by adjoining the $p^e$-th roots of elements of $R$.
We say that $R$ is \emph{$F$-finite} if $R^{1/p}$ is a finitely generated $R$-module. 
Throughout this discussion, we assume that $R$ is reduced and $F$-finite.

We now recall some basic definitions from the theory of tight closure developed by Hochster and Huneke \cite{HochsterHunekeTC1,HochsterHunekeTightClosureAndStrongFRegularity};
we also refer to \cite{HunekeTightClosureBook} for a reference. 
If $I$ is an ideal of $R$, the \emph{tight closure of $I$}, $I^*$, is the ideal of $R$
consisting of all elements $z\in R$ for which there exists some $c \in R$
that is not in any minimal prime of $R,$ such that
$$
cz^q \in I^{[q]} \hbox{ for all }q = p^e \gg 0.
$$

An ideal $I$ is \emph{tightly closed} if $I^* = I$.
A ring  is called  \emph{$F$-rational} if every parameter ideal is tightly closed, and is called \emph{weakly  $F$-regular} if every ideal of the ring is tightly closed.
A ring is called \emph{$F$-regular} if every localization of the ring is weakly $F$-regular. 
It is not known whether every weakly $F$-regular ring is $F$-regular. 
We point out that tight closure does not commute with localization in general \cite{Brenner-Monsky}.

The \emph{test ideal} of $R$ is defined by 
$$
\tau(R)= \bigcap_{\substack{J \subseteq  R \text{ ideal} }} (J:J^*).
$$ 
The test ideal of $\tau(R)$ is an important object in understanding how ``far" a ring is from being weakly $F$-regular. In particular,  
$\tau(R)=R$ if and only if $R$ is weakly $F$-regular.

If the $R$-module inclusion $R\hookrightarrow R^{1/p}$ splits, $R$ is called \emph{$F$-pure}. The $F$-purity of a ring simplifies computations for cohomology groups and implies vanishing properties of these groups \cite{HochsterRobertsFrobeniusLocalCohomology,LyubeznikCD}. 
If for every $c\in R$ not contained in any minimal prime of $R$, there exists $e\geq 1$ such that the $R$-module map $R \to R^{1/p^e}$ defined by $1\mapsto c^{1/p^e}$ splits, then $R$ is called \emph{strongly $F$-regular} . 
A local ring $(R,\mathfrak{m},K)$ is \emph{$F$-injective} if the action of Frobenius on the local cohomology module $H^i_{\mathfrak{m}}(R)$ is injective for every $i\in\NN$. 
The relations among these properties are the following:
$$
%\begin{Bmatrix}
%\hbox{all}\\ 
%\hbox{rings}\\
%\end{Bmatrix}
%\supseteq
\begin{Bmatrix}
F\hbox{-injective}\\
\hbox{rings}\\
\end{Bmatrix}
\supseteq
\begin{Bmatrix}
F\hbox{-pure}\\
\hbox{rings}\\
\end{Bmatrix}
\supseteq
\begin{Bmatrix}
\hbox{weakly}\\
F\hbox{-regular}\\
\hbox{rings}\\
\end{Bmatrix}
\supseteq
\begin{Bmatrix}
F\hbox{-regular}\\
\hbox{rings}\\
\end{Bmatrix}
\supseteq
\begin{Bmatrix}
\hbox{strongly}\\
F\hbox{-regular}\\
\hbox{rings}\\
\end{Bmatrix}
\supseteq
\begin{Bmatrix}
\hbox{regular}\\
\hbox{rings}\\
\end{Bmatrix}.
$$
}

The generalized Lyubeznik numbers of the form $\lambda_0^j(R)$ can 
provide information on singularities in characteristic $p>0$.
In particular, results of Blickle indicate that they encode information on $F$-regularity and $F$-rationality.
\edit{ We present these results, as done in \cite{NuWiEqual}}.

\edit{
\begin{theorem} \label{BliThm} \cite{BlickleIntHomology}
Suppose that $(R, \fm, K)$ is a $d$-dimensional complete local domain of characteristic $p>0$.  
If $R$ is $F$-injective and $\lambda_0^d(R;L)=1$, then $R$ is $F$-rational.  
Moreover, if $K$ is perfect and $R$ is $F$-rational, then $\lambda_0^d(R)=1$.
%Assume, additionally, that $K$ is perfect. If $R$ is $F$-rational, then $\lambda_0^d(R)=1$. Moreover if $R$ is $F$-injective, then $R$ is $F$-rational if and only if  $\lambda_0^d(R)=1$.   
\end{theorem}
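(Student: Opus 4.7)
The plan is to first unwind the definition: with $S = K\llbracket x_1,\ldots,x_n\rrbracket$ surjecting onto $R$ through an ideal $I$ of height $n-d$, we have $\lambda_0^d(R;L) = \Length_{D(S,K)} H^{n-d}_I(S)$, so $\lambda_0^d(R;L) = 1$ is equivalent to the single top-vanishing local cohomology module $H^{n-d}_I(S)$ being a simple $D(S,K)$-module. Since generalized Lyubeznik numbers are completion-stable, I would keep $R$ complete throughout, and exploit that in characteristic $p>0$, Frobenius endows $H^{n-d}_I(S)$ with the structure of a Lyubeznik $F$-module.

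The central tool will be Blickle's intersection homology $D$-module $L(R/I, S) \subseteq H^{n-d}_I(S)$: the minimal nonzero Lyubeznik $F$-module submodule whose support is $\mathbb{V}(I)$. By construction it is $F$-simple, every $F$-submodule of $H^{n-d}_I(S)$ is automatically a $D(S,K)$-submodule, and Blickle's characterization of $F$-rationality asserts (under mild hypotheses such as $F$-injectivity) that $R/I$ is $F$-rational precisely when the inclusion $L(R/I, S) \hookrightarrow H^{n-d}_I(S)$ is an equality. I would invoke this correspondence as a black box.

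For the direction \emph{$F$-injective plus $\lambda_0^d(R;L) = 1$ implies $F$-rational}, $D(S,K)$-simplicity of $H^{n-d}_I(S)$ forces $L(R/I, S) = H^{n-d}_I(S)$, since $L(R/I, S)$ is a nonzero $D(S,K)$-submodule; combined with $F$-injectivity, which (together with Smith's theorem on top local cohomology) eliminates the possibility of proper Frobenius-stable submodules obstructing Cohen--Macaulayness, Blickle's criterion yields $F$-rationality. For the converse, the $F$-rationality of $R$ forces $L(R/I, S) = H^{n-d}_I(S)$, so $H^{n-d}_I(S)$ is $F$-simple; the perfectness of $K$ is then what upgrades $F$-simplicity to $D(S,K)$-simplicity, since over a perfect field every $D(S,K)$-submodule of a Lyubeznik $F$-module carries a compatible $F$-module structure, so the only nonzero $D(S,K)$-submodule is $L(R/I, S) = H^{n-d}_I(S)$ itself, giving $\Length_{D(S,K)} H^{n-d}_I(S) = 1$.

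The main obstacle, and the heart of the argument, is the precise transit between Lyubeznik's $F$-module category (where $F$-rationality is characterized via simplicity) and the full $D(S,K)$-module category (where $\lambda_0^d$ is measured): in one direction the $F$-injectivity is exactly the hypothesis that lets the $F$-module picture detect $F$-rationality, while in the other the perfectness of $K$ is what prevents extra $D(S,K)$-submodules from appearing beyond the $F$-submodules. Both of these steps are the content of Blickle's intersection-homology $D$-module framework, which I would cite rather than reprove.
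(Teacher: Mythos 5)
The survey does not prove this theorem: it is stated as a result of Blickle, cited from \cite{BlickleIntHomology} and ``presented as in \cite{NuWiEqual},'' so there is no in-paper argument to compare yours against. Your sketch is a broadly accurate roadmap of where the proof lives --- the identification $\lambda_0^d(R;L)=\Length_{D(S,K)}H^{n-d}_I(S)$ is exactly right, and the relevant machinery is indeed Blickle's intersection homology module $L(R/I,S)$ together with the interplay between unit $F$-submodules and $D(S,K)$-submodules. But as a proof it delegates every substantive step to the black box, and the two places where you do try to explain the mechanism are the two places where the description is off. First, $L(R/I,S)$ is not ``by construction'' an $F$-simple minimal $F$-submodule whose $D$-stability is automatic; its existence and uniqueness as the simple $D(S,K)$-submodule of $H^{n-d}_I(S)$ is the main theorem of \cite{BlickleIntHomology}, and matching it with the minimal unit $F$-submodule is precisely where the perfectness of $K$ is consumed --- so your account of the converse direction is circular in the sense that it assumes the hardest input. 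Second, your gloss on $F$-injectivity (``eliminates proper Frobenius-stable submodules obstructing Cohen--Macaulayness'') is not the actual mechanism: the correspondence between $H^{n-d}_I(S)$ and $H^d_{\fm}(R)$ with its Frobenius action is faithful only up to $F$-nilpotence, and $F$-injectivity is what kills the nilpotents, so that $L(R/I,S)=H^{n-d}_I(S)$ translates into $0^*_{H^d_{\fm}(R)}=0$ and one can invoke Smith's tight-closure characterization of $F$-rationality.

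The one genuine gap you should not wave away: $F$-rationality of an excellent local ring forces Cohen--Macaulayness, yet $\lambda_0^d(R;L)=1$ is a statement about the single module $H^{n-d}_I(S)$ and says nothing about $H^j_I(S)$ for $j>n-d$, while $F$-injectivity alone does not imply Cohen--Macaulay. Your sketch never explains how the hypotheses produce the vanishing of the lower local cohomology modules $H^i_{\fm}(R)$, $i<d$ (equivalently, how one gets from ``a full system of parameters is tightly closed'' to Cohen--Macaulayness via Hochster--Huneke). Any honest write-up, even one citing Blickle for the duality formalism, has to say where this comes from; as it stands the argument only reaches ``$0^*_{H^d_{\fm}(R)}=0$,'' which is weaker than $F$-rationality.
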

}
 
%As a consequence, if $R$ is a $d$-dimensional complete local Gorenstein domain of characteristic $p>0$ such that $K$ is $F$-finite, then if $R$ is $F$-pure and $\lambda_0^d(R)=1$, then $R$ is $F$-regular, and if $K$ is perfect, then if $R$ is $F$-pure, $R$ is $F$-regular if and only if $\lambda_0^d(R)=1$.

{\edit{
\begin{example} Suppose that $K$ is a field, and let $R = K[X]$ be the polynomial ring over $K$ in the entries of an $r \times r$ square matrix $X$ of indeterminates, $r > 1$.
Let $\fm$ denote the homogeneous maximal ideal of $R$, and let $\det X$ denote the determinant of $X$.
When $K$ is a perfect field of characteristic $p>0$, $R_\fm/(\det X) R_\fm$ is strongly $F$-regular, so also $F$-rational, and $\lambda_0^{r^2-1} \left (R_\fm/(\det X) R_\fm\right)=1$ \cite{HochsterHunekeTightClosureOfParameterIdealsAndSplitting}.
However, when $K$ has characteristic zero, we have that  $\lambda_0^{r^2-1} \left(R_\fm/(\det X) R_\fm\right)\geq 2$ \cite{WaltherBS}.
%$\lambda_0^{r^2-1} \left (R_\fm/(\det X) R_\fm\right)=1$ when $K$ has characteristic $p>0$. This says that $R_\fm/(\det X) R_\fm$ is $F$-rational, since it is an $F$-pure ring, in particular,  $F$-injective.
\end{example}
}

Work in \cite{NunezPerezFJacobian} indicates that the generalized Lyubeznik numbers can measure how ``far" an $F$-pure hypersurface ring is from being $F$-regular.  Take an element $f$ of an $F$-finite regular local ring $R$ of characteristic $p>0$ such that $R/(f)$ is $F$-pure.  
Let $\tau_1$ be the pullback of the test ideal of $R/(f)$ to $R$, and for $i>1$, and let $\tau_i$ denote the pullback of the test ideal of $R/\tau_{i-1}$ to $R.$ Vassilev showed that the chain $0 \subsetneq \tau_1 \subsetneq \tau_2 \subsetneq \cdots \subsetneq \tau_s = R$ is finite and each $R/\tau_i$ is $F$-pure \cite{Vassilev}.  In fact, $\lambda_0^{\dim\left(R/(f) \right)} \left(R/(f); L\right) \geq s$ for any coefficient field $L$ of $\widehat{R/(f)}$.
In particular, if $\lambda_0^{\dim\left(R/(f) \right)} \left(R/(f); L\right) =1$, then $R/(f)$ is $F$-regular, 
which recovers the result of Blickle in this case (\emph{cf.} Theorem \ref{BliThm}).  

{\edit{
\begin{question} 
What further geometric and algebraic properties are captured by the generalized Lyubeznik numbers of a local ring
containing a field? In particular, do these numbers measure
singularities in characteristic zero? 
\end{question}  
}}
\section{Lyubeznik numbers of Stanley-Reisner rings}

\begin{definition}[Simplicial complex, face/simplex, dimension of a simplicial complex or face, facet]
%[Simplicial complex, faces/simplices, dimension of a face, $i$-face, facet]
A \emph{simplicial complex} $\Delta$ on the vertex set
$[n] = \{1, \ldots, n\}$ is a collection of subsets, called \emph{faces} or \emph{simplices}, that is closed under
taking subsets. The \emph{dimension} of a face
$\sigma \in\Delta$ is defined as $\dim(\sigma)=|\sigma|-1$. The \emph{dimension}  of $\Delta$, $\dim(\Delta),$ is the maximum of the dimensions of
its faces; by convention, $\dim(\Delta) := -1$ if $\Delta=\varnothing$. We let $F_i(\Delta)$ denote the set of faces of $\Delta$ dimension $i$.
A \emph{facet} of $\Delta$ is a face of $\Delta$ that is not strictly contained in any other face.

%A face is a \emph{facet} if it is not contained in any other face.
\end{definition}
\edit{If $\Delta_1$ and $\Delta_2$ are simplicial complexes on the vertex set $[n]$, then
$\Delta_1\cap\Delta_2$ and $\Delta_1\cup\Delta_2$ are also simplicial complexes.
%In addition, if $\sigma_1,\ldots,\sigma_\ell$ are the maximal facets of $\Delta$, then $\Delta = \mathcal{P}(\sigma_1)\cup \ldots \cup \mathcal{P}(\sigma_\ell)$, where $\mathcal{P}(\sigma_i)$ is the power set of the 
%In particular, a 
Moreover, a simplicial complex
is determined by its facets. } 

\begin{definition}[Alexander dual of a simplicial complex, link]
The Alexander dual  of a simplicial complex $\Delta$ on $[n]$ is defined as 
\[ \Delta^\vee: =\{[n]\setminus \sigma \mid \sigma\in\Delta\}.\]
Given a simplex $\sigma\in \Delta,$
 we define its link inside $\Delta$ by 
$\link_\Delta=\{\tau\in\Delta\mid \tau\cup\sigma\in\Delta\hbox{ and }\tau\cap\sigma=\varnothing \} $
\end{definition}

\begin{notation}
Let $\Delta$ be a simplicial complex on the vertex set $[n]$ and $\sigma\in\Delta.$ Then $x^\sigma$ denotes $\prod \limits_{i\in\sigma} x_i\in S=K[x_1,\ldots,x_n].$
\end{notation}
\begin{definition}[Stanley-Reisner ideal and ring]  Given a field $K$, the \emph{Stanley-Reisner ideal associated to the simplicial complex $\Delta$} of $K[x_1,\ldots,x_n]$
is the squarefree monomial ideal
$I_{\Delta} = \langle x^\sigma \mid \sigma\not\in\Delta \rangle$. The \emph{Stanley-Reisner ring of $\Delta$} is $K[x_1,\ldots,x_n]/I_\Delta$.
\end{definition}

\begin{theorem}[see \textup{\cite[Theorem 1.7]{MillerSturmfels}}]\label{Bijection}
Given a field $K$, 
the map sending $\Delta$ to the ideal $I_\Delta$ of $S = K[x_1, \ldots, x_n]$ defines a bijection from
simplicial complexes on the vertex set $[n]$ to squarefree monomial ideals of $S$. 
\end{theorem}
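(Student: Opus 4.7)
The plan is to exhibit an explicit inverse to the map $\Delta \mapsto I_\Delta$, and then check that the two compositions are the identity. Given a squarefree monomial ideal $J \subseteq S$, define
\[ \Delta_J := \{\sigma \subseteq [n] \mid x^\sigma \notin J\}. \]
I would first check that $\Delta_J$ is in fact a simplicial complex. If $\tau \subseteq \sigma$ and $\sigma \in \Delta_J$, then $x^\tau$ divides $x^\sigma$; since $J$ is an ideal, $x^\tau \in J$ would force $x^\sigma \in J$, contradicting $\sigma \in \Delta_J$. Hence $\tau \in \Delta_J$, showing closure under taking subsets.

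Next I would verify $I_{\Delta_J} = J$. The inclusion $I_{\Delta_J} \subseteq J$ is immediate from the definition, since each generator $x^\sigma$ of $I_{\Delta_J}$ satisfies $\sigma \notin \Delta_J$, i.e.\ $x^\sigma \in J$. For the reverse inclusion, the key input is that a squarefree monomial ideal is generated by squarefree monomials; any squarefree monomial $x^\sigma$ lying in $J$ satisfies $\sigma \notin \Delta_J$ by construction, and is therefore a generator of $I_{\Delta_J}$.

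Finally I would verify $\Delta_{I_\Delta} = \Delta$, i.e., $\sigma \in \Delta$ if and only if $x^\sigma \notin I_\Delta$. The ``only if'' direction uses the subset-closure of $\Delta$: if $\sigma \in \Delta$ but $x^\sigma \in I_\Delta$, then some generator $x^\tau$ with $\tau \notin \Delta$ must divide $x^\sigma$, forcing $\tau \subseteq \sigma$ and hence $\tau \in \Delta$, a contradiction. The ``if'' direction is immediate, since $x^\sigma$ is by definition a generator of $I_\Delta$ whenever $\sigma \notin \Delta$.

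The whole argument is essentially bookkeeping with definitions; there is no deep step. The only place where one has to be slightly careful is in the verification $I_{\Delta_J} = J$, where one needs the fact that a squarefree monomial ideal is generated by its squarefree monomials (so that membership in $J$ can be detected at the level of squarefree monomials). Everything else follows formally from the observation that, for subsets $\tau \subseteq \sigma$ of $[n]$, $x^\tau \mid x^\sigma$ in $S$.
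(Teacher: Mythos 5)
Your argument is correct and is exactly the standard proof of the Stanley--Reisner correspondence: the paper itself gives no proof (it cites Miller--Sturmfels), and your explicit inverse $J \mapsto \Delta_J = \{\sigma \mid x^\sigma \notin J\}$ together with the two composition checks is the argument given there. The one fact you rightly flag --- that membership of a squarefree monomial in a monomial ideal is detected by divisibility by a (squarefree) generator --- is the only nontrivial input, and you use it correctly in both directions.
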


\begin{example}\label{ExMon2}
Given a field $K$, let $S=K[x_1,\ldots,x_6]$ and let
\[I = (x_1 x_2 x_3, x_1 x_6, x_2 x_4, x_2 x_5, x_2 x_6, x_3 x_6).\]  Then $I = I_\Delta$, where $\Delta$ is the following simplicial complex:

\begin{center}
\begin{pspicture}(3, 3)
\rput(1.15,2.7){$1$}
\rput(-0.25,1.5){$2$}
\rput(1,0.25){$3$}
\rput(2.6,0.4){$4$}
\rput(4.3,1.75){$6$}
\rput(3,2){$5$}
\psline(1,2.5)(0,1.65)(1,0.5)
\pscustom[fillstyle=solid, fillcolor=black, opacity=0.45]{\psline[linestyle=none](1,0.5)(3,1.75)(2.5,0.75)(1,0.5)}
\pscustom[fillstyle=solid, fillcolor=black, opacity=0.52]{\psline(1,0.5)(1,2.5)(2.5,0.75)(1,0.5)}
\pscustom[fillstyle=solid, fillcolor=black, opacity=0.3]{\psline(2.5,0.75)(4,1.75)(3, 1.75)}
\pscustom[fillstyle=solid, fillcolor=black, opacity=0.37]{\psline(1,2.5)(3,1.75)(2.5,0.75)(1,2.5)}
\psline[linestyle=dashed](1, 0.5)(3, 1.75)
\psdot(0,1.65)
\psdot(1, 2.5)
\psdot(1,0.5)
\psdot(2.5,0.75)
\psdot(3,1.75)
\psdot(4, 1.75)
\end{pspicture}
\end{center}
\end{example}

We now fix notation for the following discussion.

\begin{notation}
Suppose that $K$ is a field, let $S=K[x_1,\ldots,x_n]$, and let $\fm$ denote its homogeneous maximal ideal $(x_1, \ldots, x_n)$.   
Fix the standard grading on $S$; \emph{i.e.}, $\deg(x_i)=1$ for $1\leq i\leq n$.
Moreover, fix a simplicial complex $\Delta$ on vertex set $[n]$, and set $I=I_\Delta.$
\end{notation}

Suppose that $I$ is an squarefree monomial ideal in $S$, and let $R=S/I.$
Since $I$ is a homogeneous ideal, $H^{n-j}_I(S)$  and $H^i_\fm H^{n-j}_I(S)$ are graded $S$-modules for all $i, j \in \NN$ (see Remark \ref{LocalDuality}). Results of 
Lyubeznik imply that $H^i_\fm H^{n-j}_I(S)$ is isomorphic, as $S$-modules, with a direct sum of $\lambda_{i,j}(R_\fm)$ copies the injective hull $E_S(K)$ \cite{LyubeznikInjectiveDim}.
Results of Y.\ Zhang in characteristic $p>0$, and extended to arbitrary characteristic by Ma and the third author, imply that,
% the third author with Ma and Zhang \cite{EulerianDModules,YiZhang} imply that, 
in fact, $H^i_\fm H^{n-j}_I(S)\cong H^n_\fm(S)^{\oplus \lambda_{i,j}(R_\fm)}$ as graded $S$-modules  \cite{EulerianDModules,YiZhang}. % for every homogeneous ideal. 
This was previously known for monomial ideals (see, for instance, \cite{MustataSRrings,Yanagawa}).
In particular, 
$\lambda_{i,j}(R_\fm)$ is equal to the $K$-vector space dimension of the $(-n)$-degree piece of $H^i_\fm H^{n-j}_I(S).$

Musta{\c{t}}{\v{a}} used the $\NN^n$-graded structures of $S$ and of $H^j_I(S)$ to describe the structure of  $H^j_I(S)$ in terms of $\Delta$: 
for every $\alpha\in \ZZ^n,$ $\left[H^j_I(S)\right]_{-\alpha}=
\widetilde{H}^{j-2}(\Delta^\vee;K)$ \cite{MustataSRrings}.
In addition, in \cite{Yanagawa},  Yanagawa  used this graded structure on the local cohomology modules  to prove that 
$$
\lambda_{i,j}(R)=\dim_K \left[\Ext^{n-i}_S\(\Ext^{n-j}_S(S/I,S)\),S\right]_{(0,\ldots,0)}.
$$
{\edit{We point out that there exist analogous formulas for the Bass
numbers of local cohomology modules of simplicial normal Gorenstein semigroup rings with support in squarefree monomial ideals \cite{YanagawaSheaves}.}}

Let $F_\bullet$ denote the minimal graded free resolution of $I_{\Delta^\vee}$,
%This is a complex of $R$-modules
$$
F_\bullet : \ \ \ 0\to F_{\ell}\FDer{d_\ell}F_{\ell}\overset{d_{\ell-1}}{\longrightarrow} \ldots \FDer{} F_1\FDer{d_1} F_0 \to I_{\Delta^\vee} \to 0,
$$ 
where $d_i(F_{i})\subseteq \fm F_{i-1}$. % $H_0(F^\bullet)=I_{\Delta^\vee}$, and $H_i(F^\bullet)=0$ for $i>0$.
Since the resolution is graded, we write $F_i$ as a direct sum of copies of $S$ with degree shifts,
$$
F_i=\bigoplus_{j\in\ZZ} S(-j)^{\beta_{i,j}},
$$
where $\beta_{i,j}$ is called the \emph{$(i,j)$-th graded Betti number,}  an invariant of $I_{\Delta^\vee}.$
\edit{We define the \emph{$r$-linear strand of $I_{\Delta^\vee}$}  by
$$
F^{\langle r \rangle}_\bullet: \ \ \ 0\to F^{\langle r \rangle}_{n-r}\FDer{d_{n-r}}F^{\langle r \rangle}_{n-r-1}\overset{d_{n-r-1}}{\longrightarrow} \ldots \FDer{} F^{\langle r \rangle}_1\FDer{d_1} F^{\langle r \rangle}_0\to 0, 
$$ 
where
$$
F^{\langle r \rangle}_i=\bigoplus_{|j|=i+r} S(-j)^{\beta_{i, j}}
$$
and the maps correspond to the components in the resolution $F_\bullet$.
We consider the complex $G^{\langle r \rangle}_\bullet$ given by tensoring $F^{\langle r \rangle}_{\bullet}$ with $\left. S \middle/ (x_1-1,\ldots,x_n-1)\right.$ and taking the corresponding monomial matrices (see \cite{MillerE}).
}
%$$
%F^{\langle r\rangle}_\bullet : \ \ \ \ 0\to F^{\langle r\rangle}%_{\ell}\FDer{d_\ell}F^{\langle r\rangle}_{\ell-1}
%\FDer{d_{\ell-1}}\ldots \FDer{d_2} F^{\langle r\rangle}_1\FDer{d_1} F^{\langle r\rangle}_0
%$$ 
%Let $\widehat{F}^{\langle r\rangle}_\bullet$ the complex given by transposing the monomial matrices of $F^{\langle r\rangle}_\bullet$
\begin{theorem}[\cite{AMV}]\label{ThmAMV}
Let $K$ be a field and  let $S=K[x_1,\ldots,x_n]$.  Fix $\Delta$, a simplicial complex on vertex set $[n]$, and let $I=I_\Delta$  denote the Stanley-Reisner ideal of $S$ associated to $\Delta$; moreover, let $R=S/I_\Delta$ be the corresponding Stanley-Reisner ring.
Let $I_{\Delta^\vee}$ be the Stanley-Reisner ideal associated to the Alexander dual, $\Delta^\vee,$ of $\Delta$, and\edit{ let $G^{\langle r \rangle}_\bullet$ be the complex defined in the previous paragraph.

Then
$$
\lambda_{i,j}(R)=\dim_K H_{j-i} \left (G^{\langle n-j\rangle}_\bullet\right),
$$ where $H_\bullet$ denotes homology.}
\end{theorem}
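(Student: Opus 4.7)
The plan is to reduce the double-$\Ext$ formula of Yanagawa stated immediately above the theorem,
$$\lambda_{i,j}(R) = \dim_K \left[\Ext^{n-i}_S\left(\Ext^{n-j}_S(S/I_\Delta,S),S\right)\right]_{(0,\ldots,0)},$$
to a combinatorial complex built from the minimal free resolution of $I_{\Delta^\vee}$, and then identify that complex with $G^{\langle n-j\rangle}_\bullet$ via Miller's monomial matrix formalism. Throughout, the key is to keep careful track of the $\ZZ^n$-graded structure carried by every module in sight.

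First I would analyze the inner module $N:=\Ext^{n-j}_S(S/I_\Delta,S)$, which is naturally $\ZZ^n$-graded and squarefree. Using Alexander duality for squarefree modules (in the sense of Miller \cite{MillerE} and Yanagawa \cite{Yanagawa}), the minimal $\ZZ^n$-graded free resolution of $N$ is identified, up to a global shift, with the $(n-j)$-linear strand $F^{\langle n-j\rangle}_\bullet$ of the minimal free resolution of $I_{\Delta^\vee}$: the free summands, their multidegrees, and the monomial matrices describing the differentials match on both sides. Concretely, the summand $S(-\mathbf{a})^{\beta_{p,\mathbf{a}}}$ occurring in $F^{\langle n-j\rangle}_p$ with $|\mathbf{a}|=p+(n-j)$ and $\mathbf{a}\in\{0,1\}^n$ also appears (after the global shift) in the minimal free resolution of $N$ at the same homological degree.

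Next, I would apply $\Hom_S(-,S)$ to this resolution and extract the multidegree-$(0,\ldots,0)$ strand. A summand $S(-\mathbf{a})$ dualizes to $S(\mathbf{a})$, whose degree-$(0,\ldots,0)$ component is one-dimensional, and the induced differentials on the degree-$(0,\ldots,0)$ strands are precisely those obtained from the monomial matrices of $F^{\langle n-j\rangle}_\bullet$ by the substitution $x_i\mapsto 1$; this is exactly the passage from $F^{\langle n-j\rangle}_\bullet$ to $G^{\langle n-j\rangle}_\bullet$ in the definition preceding the theorem. Dualization reverses the homological direction, so tracking indices carefully, the $(n-i)$-th cohomology of the dualized resolution corresponds to the $(j-i)$-th homology of $G^{\langle n-j\rangle}_\bullet$, giving
$$\lambda_{i,j}(R)=\dim_K H_{j-i}\left(G^{\langle n-j\rangle}_\bullet\right).$$

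The main obstacle is the identification in the second paragraph: showing that the minimal $\ZZ^n$-graded free resolution of $N=\Ext^{n-j}_S(S/I_\Delta,S)$ is, up to the correct shift, precisely the $(n-j)$-linear strand of the minimal free resolution of $I_{\Delta^\vee}$. This requires the full strength of Alexander duality in the category of squarefree modules, together with a delicate bookkeeping of $\ZZ^n$-graded shifts so that the specialization $x_i\mapsto 1$ reproduces $G^{\langle n-j\rangle}_\bullet$ rather than a shifted or twisted variant. Once this bridge is in place, everything else is a formal exercise with monomial matrices and the graded Matlis dual.
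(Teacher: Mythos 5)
The survey does not actually prove Theorem \ref{ThmAMV}; it is quoted from \cite{AMV}, and Remark \ref{refRem} only comments on how the form here differs from the original (transposed monomial matrices, cohomology of a contravariant category of hypercubes). So your sketch is being measured against the literature rather than against an argument in the paper. Your overall strategy --- start from Yanagawa's formula $\lambda_{i,j}(R)=\dim_K \left[\Ext^{n-i}_S\left(\Ext^{n-j}_S(S/I,S),S\right)\right]_{(0,\ldots,0)}$, pass to the linear strands of $I_{\Delta^\vee}$ via Alexander duality for squarefree modules, and read off the degree-zero strand via monomial matrices --- is indeed the right circle of ideas and is essentially how this result is obtained.

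However, the pivotal claim of your second paragraph is false as stated: the minimal $\ZZ^n$-graded free resolution of $N=\Ext^{n-j}_S(S/I_\Delta,S)$ cannot be identified, even up to shift, with the linear strand $F^{\langle n-j\rangle}_\bullet$. A minimal free resolution is acyclic away from homological degree $0$, whereas the whole point of the theorem is that $G^{\langle n-j\rangle}_\bullet$ (hence $F^{\langle n-j\rangle}_\bullet$) has nonzero homology in degree $j-i$ for \emph{every} $i$ with $\lambda_{i,j}(R)\neq 0$, and there are many examples with several nonzero entries in a single column of the Lyubeznik table. The indexing also cannot be repaired: to land on $H_{j-i}$ rather than $H_{n-i}$ you would need $F^{\langle n-j\rangle}_p$ to sit in homological degree $p+(n-j)$ of the resolution, forcing $P_0=F^{\langle n-j\rangle}_{j-n}=0$ while $N\neq 0$. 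The correct bridge (due to R\"omer and Yanagawa) is not an identification of resolutions but an exact Koszul/linearization functor from squarefree modules to linear complexes that carries $\Ext^{n-j}_S(S/I_\Delta,\omega_S)$ to the strand $F^{\langle n-j\rangle}_\bullet$; the \emph{homology} of that strand then computes $\Ext^{\bullet}_S\left(\Ext^{n-j}_S(S/I_\Delta,\omega_S),\omega_S\right)$, which is what Yanagawa's formula requires. With that lemma in place your third paragraph (exactness of taking the multidegree-$(0,\ldots,0)$ strand, the substitution $x_i\mapsto 1$, and the transposition flagged in Remark \ref{refRem}) goes through, but as written the key step is not merely ``delicate bookkeeping'' --- it is an incorrect assertion that must be replaced by the duality theorem itself.
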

\edit{
\begin{remark}\label{refRem}
The form of Theorem \ref{ThmAMV} is not exactly as it appears in \cite{AMV}.  Their result transposes the monomial matrices of $G^{\langle r\rangle}_{\bullet}$, and uses cohomology because the category of hypercubes that they consider is contravariant.
\end{remark}
}
{\edit{
Since there are algorithms for computing  $F_\bullet,$ 
and so $G^{\langle n-j\rangle}_\bullet,$ the previous theorem gives an algorithm to compute
the Lyubeznik numbers of Stanley-Reisner rings in any characteristic.}} There are other algorithms and techniques for computing Lyubeznik numbers for such rings \cite{AM-CharCyclesI,AM-CharCyclesII,Yanagawa}. 
%In a similar direction, there are algorithms and techniques developed to study rings of subspace arrangements \cite{AM-GL-Z,Kho-DModArrangI,Kho-DModArrangII}.

The algorithm given by Theorem \ref{ThmAMV} was implemented by Fern\'andez-Ramos \cite{Tutorial} in 
{\tt Macaulay2} \cite{Macaulay} (see \cite{MONICA} for details). 
\begin{example}
For $S$, $I$,  and  $\Delta$ from Example \ref{ExMon2},
$$
\Lambda(S/I) =\begin{pmatrix}
 0 & 0 & 0 & 0 & 0 \\
 0 & 0 & 0 & 0 & 0 \\
 0 & 0 & 0 & 0 & 0 \\ 
 0 & 0 & 0 & 0 & 0 \\
 0 & 0 & 0 & 0 & 1 \\
\end{pmatrix}.
$$
\end{example}
\begin{example}
Let $K$ be a field, let $S = K[x_1, \ldots, x_6]$, and let $I$ be as defined in \eqref{exmon1}.  Then 
%For $S$ and $I$ from Example \ref{ExMon1},
$$
\Lambda(S/I) =\begin{pmatrix}
   0 & 0 & 0  & 0  \\
   0 & 0 & 0  & 0  \\
   0 & 0 &  0 & 0  \\
   0 & 0  & 0 & 1 \\
\end{pmatrix}.
$$
\end{example}

\begin{example}[\cite{AMV}]
\label{LyuNum S-R differ}
Let $S=K[x_1,\ldots,x_6]$ and let $I$ denote the monomial ideal of $S$ generated by
\begin{alignat*}{20}
 &x_1x_2x_3 , \ \ &x_1x_2x_4 , \ \ &x_1x_3x_5, \ \ &x_1x_4x_6, \ \ &x_1x_5x_6, \ \
&  &x_2x_3x_6, \ \ &x_2x_4x_5, \ \ &x_2x_5x_6, \ \ &x_3x_4x_5, \ &\text{ and } \ &x_3x_4x_6.
\end{alignat*}
The simplicial complex associated to $I$ corresponds to a minimal triangulation of $\mathbb{P}^{2}_\mathbb{R}$, and the projective algebraic set that $I$ defines in $\mathbb{P}^{5}_K$ has been called \emph{Reisner's variety} since he introduced it in 
\cite[Remark 3]{Reisner}.  If $K=\QQ$, then
$$
\Lambda(S/I) =\begin{pmatrix}
 0 &  0 & 0 & 0 \\
 0 &  0 & 0 & 0 \\
 0 &  0 & 0 & 0 \\
 0 &  0 & 0 & 1 \\
\end{pmatrix},
$$
but if $K=\ZZ/2\ZZ,$ then
$$
 \Lambda(S/I) = \begin{pmatrix}
 0 & 0 & 1 & 0 \\
 0 & 0 & 0 & 0 \\
 0 & 0 & 0 & 1 \\
 0 & 0 & 0 & 1 \\
\end{pmatrix}.
$$
This example shows that the Lyubeznik numbers of Stanley-Reisner rings depend on the characteristic. {\edit{We point out that Blickle has found examples for which these invariants exhibit ``bad" behavior under reduction to positive characteristic \cite{BlickleLyuCIS}.}}
\end{example}

The generalized Lyubeznik numbers for Stanley-Reisner rings can also be computed  as certain lengths in the category of straight modules
\cite{Yanagawa}, and in terms of characteristic cycle multiplicities in characteristic zero \cite{NuWiEqual}.
Indeed, let $K$ be a field and let $S=K[x_1,\ldots,x_n]$, so that its completion at its homogeneous maximal ideal is $\widehat{S}=K \llbracket x_1,\ldots,x_n \rrbracket $.
Let $I_1,\ldots,I_\ell$ be  ideals of $S$ generated by squarefree monomials, and fix $i_1, \ldots, i_\ell \in \NN$.
Then 
\begin{align*} \lambda^{i_\ell,\ldots, i_1}_{I_\ell,\ldots,I_1} \( \widehat{S}\)
%& =\Length_{D(\widehat{S},K)} H^{i_s}_{I_s} \cdots H^{i_2}_{I_2} H^{i_1}_{I_1} (\widehat{S}) \\
=\Length_{{\bf Str}} 
H^{i_\ell}_{I_\ell} \cdots H^{i_2}_{I_2} H^{n- i_1}_{I_1} (\omega_S) 
= \sum_{\alpha \in \{0,1\}^n} \dim_k \left[ H^{i_\ell}_{I_\ell} \cdots H^{i_2}_{I_2} H^{n-i_1}_{I_1} (\omega_S)\right]_{-\alpha}.
\end{align*}
Moreover, if $K$ has characteristic zero, then
$
\lambda^{i_1,\ldots, i_\ell}_{I_1,\ldots,I_\ell} \(\widehat{S}\)
=\Mult \(H^{i_\ell}_{I_\ell} \cdots H^{i_2}_{I_2} H^{n-i_1}_{I_1} (S)\),
$ where $\Mult(M)$ denotes the characteristic cycle multiplicity of a $D(S,K)$-module $M$.  

Therefore, certain generalized Lyubeznik numbers of a characteristic zero Stanley-Reisner ring can be computed using algorithms for calculating characteristic cycle multiplicities (see \cite{AM-CharCyclesI,AM-CharCyclesII,AM-GL-Z}).
In addition, one can also compute the Lyubeznik characteristic using these algorithms. \edit{We point out that the generalized Lyubeznik numbers may differ from the characteristic cycle multiplicities if the ideals are not monomial.}

%{\cb
\begin{example}\label{GenLyuNum S-R differ}
Take $S$ and $I$ from Example \ref{LyuNum S-R differ}, and let $R=S/I.$
If $K=\QQ$, then $\lambda^4_0(R)=31$ and all other $\lambda^j_0(R)$ vanish.
If $K=\ZZ/2\ZZ$, then $\lambda^4_0(R)=32$, $\lambda^3_0(R)=1$, and all other $\lambda^j_0(R)=0$.
Notice that in both cases,
$
\chi_\lambda(R)=31.
$
\end{example}

Unlike the Lyubeznik numbers and the generalized Lyubeznik numbers (see Examples \ref{LyuNum S-R differ} and \ref{GenLyuNum S-R differ}), the Lyubeznik characteristic is characteristic-independent in this setting, as the following result shows.
%}

\begin{theorem}[\cite{NuWiEqual}]\label{LyuCharSR}
Take a simplicial complex $\Delta$ on vertex set $[n]$. Let $R$ denote the Stanley-Reisner ring of $\Delta$, and let $\fm$ be its homogeneous maximal ideal.
Then
$$
\chi_\lambda(R_\fm)=\sum^{n}_{i=-1} (-2)^{i+1} |F_i (\Delta)|.
$$
\end{theorem}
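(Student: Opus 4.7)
The plan is to prove the identity by induction on the number of facets of $\Delta$. Before starting, I would rewrite the right-hand side by indexing faces by cardinality rather than dimension, observing that a face of dimension $i$ has $i+1$ vertices, so
\[
\sum_{i=-1}^{n} (-2)^{i+1} |F_i(\Delta)| = f(\Delta) := \sum_{\sigma \in \Delta} (-2)^{|\sigma|}.
\]
The goal becomes $\chi_\lambda(R_\fm) = f(\Delta)$.

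For the base case of a single facet $\tau \subseteq [n]$, one has $I_\Delta = P_\tau = (x_i : i \notin \tau)$, so $R_\fm$ is a regular local ring of dimension $|\tau|$ containing the field $K$. Since such a ring has $D$-module length one over itself and $H^{|\tau|-i}_{(0)}$ vanishes except when $i = |\tau|$, I get $\lambda_0^i(R_\fm) = \delta_{i,|\tau|}$, and therefore $\chi_\lambda(R_\fm) = (-1)^{|\tau|}$. The binomial theorem handles the combinatorial side: $f(\Delta) = \sum_{j=0}^{|\tau|} \binom{|\tau|}{j}(-2)^j = (1-2)^{|\tau|} = (-1)^{|\tau|}$, matching.

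For the inductive step with $\Delta$ having $m \geq 2$ facets $\tau_1, \ldots, \tau_m$, I would decompose $\Delta = \Delta_1 \cup \Delta_2$, where $\Delta_1$ is the simplex on $\tau_m$ and $\Delta_2$ is the subcomplex whose facets are $\tau_1, \ldots, \tau_{m-1}$. Since the facets of $\Delta_1 \cap \Delta_2$ lie among the maximal elements of $\{\tau_m \cap \tau_i : i < m\}$, each of $\Delta_1$, $\Delta_2$, and $\Delta_1 \cap \Delta_2$ has strictly fewer than $m$ facets. Using the standard dictionary $I_{\Delta_1 \cup \Delta_2} = I_{\Delta_1} \cap I_{\Delta_2}$ and $I_{\Delta_1 \cap \Delta_2} = I_{\Delta_1} + I_{\Delta_2}$, the Mayer--Vietoris additivity of $\chi_\lambda$ noted earlier in the section (a consequence of Remark \ref{MVSequence}) gives
\[
\chi_\lambda(R_\Delta) = \chi_\lambda(R_{\Delta_1}) + \chi_\lambda(R_{\Delta_2}) - \chi_\lambda(R_{\Delta_1 \cap \Delta_2}),
\]
where $R_{\Delta'} := (S/I_{\Delta'})_\fm$; simultaneously, $f$ satisfies the same identity by elementary inclusion--exclusion of indicator functions on $\Delta_1$ and $\Delta_2$. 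Applying the induction hypothesis to all three subcomplexes then yields $\chi_\lambda(R_\Delta) = f(\Delta)$. The hardest ingredient is the Mayer--Vietoris additivity of the Lyubeznik characteristic itself, but that is already established in the paper, so the remaining work reduces to the clean induction above with a binomial-theorem base case.
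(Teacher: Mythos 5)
Your argument is correct and complete: the rewriting of the right-hand side as $\sum_{\sigma\in\Delta}(-2)^{|\sigma|}$, the base case via the $D$-simplicity of a regular local ring (so $\lambda_0^i(R_\fm)=\delta_{i,|\tau|}$) combined with the binomial theorem, and the induction on the number of facets using $I_{\Delta_1\cup\Delta_2}=I_{\Delta_1}\cap I_{\Delta_2}$, $I_{\Delta_1\cap\Delta_2}=I_{\Delta_1}+I_{\Delta_2}$, and the Mayer--Vietoris additivity of $\chi_\lambda$ stated earlier in the section all check out, and the count that $\Delta_1\cap\Delta_2$ has at most $m-1$ facets keeps the induction valid. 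The survey itself gives no proof of Theorem \ref{LyuCharSR} (it only cites \cite{NuWiEqual}), but your argument is precisely the natural one enabled by the tools the paper sets up and matches the strategy of the cited source.
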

%\begin{proof}[Sketch of proof] 
%\end{proof}

%{\cb
\begin{example}
Take $S$ and $\Delta$ from Example \ref{ExMon2}, let $R = S/I_\Delta$, and let $\fm$ denote the homogeneous maximal ideal of $R$. Then by Theorem \ref{LyuCharSR}, 
\begin{alignat*}{3}
\chi_\lambda(R_\fm) &= |F_{-1} (\Delta)|+ (-2) |F_0 (\Delta)| + 4 |F_1 (\Delta)| - 8 |F_2 (\Delta)| + 16 |F_3 (\Delta)| \\
&= 1-2 \cdot 6  + 4 \cdot 10  - 8 \cdot 5  + 16 \cdot 1=  5.
\end{alignat*}
In this case,
$
\lambda^4_0(R_\fm)=6, \ \lambda^3_0(R_\fm)=2, \ \lambda^2_0(R_\fm)=1,
$
and all other $\lambda^\bullet_0(R_\fm) =0$.
On the other hand, we verify that
$$
\chi_\lambda(R_\fm)=\lambda^4_0(R_\fm)-\lambda^3_0(R_\fm)+\lambda^2(R_\fm)=6-2+1=5.
$$
\end{example}
%}

\section{Lyubeznik numbers in mixed characteristic}

Although the Lyubeznik numbers are defined only for local rings containing a field (\emph{i.e.}, equal characteristic), 
in \cite{NunezBWittLyuNumMixed}, our reference for this section, an alternate definition of Lyubeznik numbers is given for all local rings with characteristic $p>0$ residue field.
These invariants are called the \emph{Lyubeznik numbers in mixed characteristic}, as they are, in particular, defined for local rings of mixed characteristic.  

The definition of the Lyubeznik numbers in mixed characteristic, like the Lyubeznik numbers, relies on the Cohen Structure Theorems.  
\edit{For any field $K$ of characteristic $p>0$, there is a unique (up to isomorphism) complete (unramified) Noetherian discrete valuation ring of mixed characteristic of the form $(V, pV, K)$.
Any complete local ring $(R, \fm, K)$ of mixed characteristic $p>0$ is the homomorphic image of some $V \llbracket x_1, \ldots, x_n \rrbracket $, where $(V, pV, K)$ is the complete Noetherian DVR corresponding to $K.$}
As any complete local ring $(R, \fm, K)$ of equal characteristic $p>0$ admits a surjection of some $K \llbracket x_1, \ldots, x_n \rrbracket $, the natural surjection $V \surj K$ induces the surjective composition $V \llbracket x_1, \ldots, x_n \rrbracket  \surj K \llbracket x_1, \ldots, x_n \rrbracket  \surj R$.

The fact that the Lyubeznik numbers in mixed characteristic are well defined relies not only on the existence of a surjection from a regular local ring, but on details of its construction \cite{Cohen}. 
These invariants are defined as follows.

\begin{ThmDef}
Let $(R,\fm,K)$ be a local ring such that $K$ has characteristic $p>0$.
By the Cohen Structure Theorems, there exists a surjection $\pi:S\surj \widehat{R}$, where $(S, \fn, K)$ is an $n$-dimensional unramified regular local ring of mixed characteristic.  
Let $I = \Ker(\pi)$, and take $i, j\in \NN$.
The \emph{Lyubeznik number of $R$ in mixed characteristic with respect to $i$ and $j$} is defined as 
$$
\widetilde{\lambda}_{i,j}(R):=\Dim_K \Ext^i_S\left(K,H^{n-j}_I (S)\right).
$$
This number depends only on $R$, $i$, and $j$; \emph{i.e.}, it is independent of the choice of $S$ and of $\pi$.  If $(R, \fm, K)$ is \emph{any} local ring such that $K$ has characteristic $p>0$, then $\lambda_{i,j} (R) := \lambda_{i,j} (\widehat{R}),$ where $\widehat{R}$ is the completion of $R$ with respect to $\fm$. 
\end{ThmDef}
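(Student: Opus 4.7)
My plan is to follow the template of the equal-characteristic proof of Theorem/Definition \ref{LyuDef}, replacing the coefficient field $K$ by the (uniquely determined) complete unramified DVR $(V,pV,K)$ of mixed characteristic. First, since Bass numbers with respect to the maximal ideal are preserved under completion, I would reduce immediately to the case in which $R$ is complete, so the existence of surjections from unramified mixed characteristic regular local rings of the form $V\llbracket x_1,\dots,x_n\rrbracket$ is guaranteed by the Cohen Structure Theorems. A separate, preliminary step is needed to guarantee that $\dim_K\Ext^i_S(K,H^{n-j}_I(S))$ is finite for any such $S$; this is the mixed-characteristic analog of the Huneke--Sharp/Lyubeznik finiteness theorem, and I would invoke it as a black box (it is the main input supplied by \cite{NunezBWittLyuNumMixed}).

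Next, given two surjections $\pi:S\surj R$ and $\pi':S'\surj R$ with $S=V\llbracket x_1,\dots,x_n\rrbracket$ and $S'=V\llbracket y_1,\dots,y_{n'}\rrbracket$ (crucially, the \emph{same} $V$, because $V$ is determined up to unique isomorphism by $K$), I would form the common refinement $S''=V\llbracket z_1,\dots,z_{n+n'}\rrbracket$ with surjection $\pi'':S''\surj R$ defined by $\pi''(z_j)=\pi(x_j)$ for $1\le j\le n$ and $\pi''(z_{n+j})=\pi'(y_j)$ for $1\le j\le n'$. Let $I''=\Ker(\pi'')$ and $\fn''$ be the maximal ideal of $S''$. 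The goal is then to prove the two equalities
\begin{align*}
\dim_K\Ext^{i}_{S''}(K,H^{n+n'-j}_{I''}(S''))&=\dim_K\Ext^{i}_S(K,H^{n-j}_I(S)),\\
\dim_K\Ext^{i}_{S''}(K,H^{n+n'-j}_{I''}(S''))&=\dim_K\Ext^{i}_{S'}(K,H^{n'-j}_{I'}(S')),
\end{align*}
from which independence follows by symmetry. By construction one has $I''=IS''+(z_{n+1}-\sigma_{n+1},\dots,z_{n+n'}-\sigma_{n+n'})$ for suitable lifts $\sigma_{n+j}\in S$ of $\pi'(y_j)$, and, after the change of regular parameters $z_j\mapsto z_j,\ z_{n+j}\mapsto z_{n+j}-\sigma_{n+j}$, the extension $S\hookrightarrow S''$ is (up to isomorphism) the iterated power-series extension in a single variable $n'$ times.

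The main technical task is to establish the mixed characteristic analogs of Properties \ref{FunctorProperties}(5) and (7) for the functor $G(M)=M\otimes_S S_{x_{n+1}}/S$ defined on $S$-modules, where now $S=V\llbracket x_1,\dots,x_n\rrbracket$ and $S\llbracket x_{n+1}\rrbracket$ plays the role of the larger ring. The isomorphism
$G(H^j_I(M))\cong H^{j+1}_{(I,x_{n+1})S\llbracket x_{n+1}\rrbracket}(M\otimes_S S\llbracket x_{n+1}\rrbracket)$
should follow, as in the equal characteristic case, from the Mayer--Vietoris sequence of Remark \ref{MVSequence} applied to the ideals $IS\llbracket x_{n+1}\rrbracket$ and $x_{n+1}S\llbracket x_{n+1}\rrbracket$, together with the identification $S\llbracket x_{n+1}\rrbracket_{x_{n+1}}/S\llbracket x_{n+1}\rrbracket\cong G(S)$; this portion is insensitive to the distinction between equal and mixed characteristic. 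The Ext identification $\Ext^i_{S\llbracket x_{n+1}\rrbracket}(M,G(N))=\Ext^i_S(M,N)$ reduces, by Kashiwara-type support considerations, to the fact that $G$ is an equivalence between $S$-modules and $S\llbracket x_{n+1}\rrbracket$-modules supported at $\mathbb{V}(x_{n+1})$, which is a purely module-theoretic statement that goes through verbatim. Iterating these over the $n'$ new variables and applying them inside the double induction over the surjections yields both displayed equalities above.

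The step I expect to be the principal obstacle is verifying that these analogs of Properties \ref{FunctorProperties}(5) and (7) suffice in mixed characteristic, because in the equal characteristic argument the $D(S,K)$-structure of the local cohomology modules was used implicitly (for instance to guarantee that $H^{n-j}_I(S)$ is a well-behaved object whose Ext with $K$ is computable after the change of rings). In mixed characteristic there is no coefficient field of $S$, so one must work with $D(S,V)$ or with the module-theoretic statement directly; this requires establishing that $G$ preserves the relevant finiteness and Bass-number-computability properties of local cohomology modules over an unramified mixed characteristic regular local ring, which is precisely where the finiteness result of \cite{NunezBWittLyuNumMixed} is essential.
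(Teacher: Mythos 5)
Your reduction to the complete case, your treatment of the finiteness of the Bass numbers as a black box, and your verification that the Kashiwara-type functor $G$ and Properties \ref{FunctorProperties} (5) and (7) carry over to $S=V\llbracket x_1,\dots,x_n\rrbracket$ all match Step 1 of the paper's argument. However, the common-refinement trick as you run it only compares two surjections $\pi,\pi'$ that restrict to the \emph{same} ring map on $V$, i.e., that induce the same coefficient ring $W=\pi(V)=\pi'(V)$ of $\widehat{R}$ via the same map; this is exactly the hypothesis of the paper's Step 1. The obstruction is visible in your own setup: to descend from $S''$ to $S'$ you need a retraction $\psi:S''\to S'$ with $\Ker(\pi'')=\psi^{-1}(I')$, and taking $\psi|_V=\mathrm{id}$ forces $\pi''|_V=\pi'|_V$, whereas by construction $\pi''|_V=\pi|_V$. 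Your justification that this is automatic --- ``$V$ is determined up to unique isomorphism by $K$'' --- is false for imperfect $K$ (the isomorphism exists but is not unique) and is in any case beside the point: even for a fixed $V$, the ring $\widehat{R}$ generally admits many coefficient rings, just as a complete equicharacteristic local ring admits many coefficient fields, so two given surjections need not agree on $V$.

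This is precisely the difficulty the paper flags when it says well-definedness ``relies not only on the existence of a surjection from a regular local ring, but on details of its construction'' \cite{Cohen}, and its Step 2 supplies the missing ingredient: one passes to the minimal presentations $T=V\llbracket x_1,\dots,x_{\mu-1}\rrbracket$ and $T'=V'\llbracket y_1,\dots,y_{\mu-1}\rrbracket$ with $\mu=\dim_K(\fm/\fm^2)$, shows that there exist surjections $\eta:T\surj R$ and $\eta':T'\surj R$ together with an isomorphism $\varphi:T\to T'$ satisfying $\eta'\circ\varphi=\eta$, and then chains Step 1 (comparing $S$ with $T$, and $S'$ with $T'$) with the isomorphism $\varphi$. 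Without this step, or an equivalent argument lifting one coefficient-ring embedding through the other surjection, your proof establishes independence only within a single compatible family of presentations, not for arbitrary choices of $S$ and $\pi$.
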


\begin{proof}[Sketch of proof]  
We know that each $\Dim_K \Ext^i_S\left(K,H^{n-j}_I (S)\right)$ is finite by \cite{LyuUMC, Nunez}. Without loss of generality, we may assume that $R$ is complete.   We will prove that the invariants are well defined in several steps.  

 \emph{Step 1.}  Take a coefficient ring $W$ of $R$, and take a complete Noetherian DVR $V$ with residue field $K$.  Take surjections $\pi: S := V \llbracket x_1, \ldots, x_{n-1} \rrbracket  \surj R$ and $\pi' : S' := V \llbracket y_1, \ldots, y_{n'-1} \rrbracket  \surj W$ such that $\pi(V) = W = \pi'(V)$, and $\pi(v) = \pi'(v)$ for all $v \in V$.  
 Let $I = \Ker(\pi)$ and let $I' = \Ker(\pi')$.
 We then have a surjection $\pi'' : S'' := V \llbracket x_1, \ldots, x_{n-1}, y_1, \ldots, y_{n'-1} \rrbracket  \surj R$ given by $x_i \mapsto \pi(x_i)$ and $y_j \mapsto \pi'(y_j).$  If $I''= \ker(\pi'')$.  Then similar to the proof of Theorem/Definition \ref{LyuDef}, Properties \ref{FunctorProperties} {\edit{(5) and (7)}} will imply that  
\begin{equation}
\dim_K \Ext_S(K, H^{n-j}_I(S))  = \dim_K \Ext_{S''}(K, H^{n+n'-j}_{I''}(S'')) = \dim_K \Ext_{S'}(K, H^{n'-j}_{I'}(S')).  \label{EasyCase}
\end{equation}

\emph{Step 2.}  Now consider the general case.  Take $V$ and $V'$ complete Noetherian domains with residue field $K$, and surjections $\pi : V \llbracket x_1, \ldots, x_{n-1} \rrbracket \surj R$ and $\pi' : V' \llbracket x_1, \ldots, x_{n'-1} \rrbracket \surj R$ such that $\pi|(V) =  W= \pi'(V')$.  Let $I = \ker(\pi)$ and $I' = \ker(\pi').$
Now, let $\mu = \dim_K(\fm/\fm^2)$, and let $T = V \llbracket x_1, \ldots, x_{\mu-1} \rrbracket $ and $T' = V' \llbracket y_1, \ldots, y_{\mu-1} \rrbracket .$  
It can be shown that there exist surjections $\eta : T \surj R$ and $\eta' : T' \surj R$ and an isomorphism $\varphi : T \to T'$ such that $\eta' \circ \varphi = \eta$.  Let $J = \ker(\eta)$ and $J' = \ker(\eta')$. Then by \eqref{EasyCase},
\begin{alignat*}{3}
&\dim_K \Ext^i_S(K, H^{n-j}_I(S)) & \ = \ & \dim_K \Ext^i_T(K, H^{\mu-j}_J(T)),  \text{ and} \\
&\dim_K \Ext^i_{S'}(K, H^{n'-j}_{I'}(S')) &\ =\ & \dim_K \Ext^i_{T'}(K, H^{\mu-j}_{J'}(T')).
\end{alignat*}
As $\varphi$ is an isomorphism, $\dim_K \Ext^i_T(K, H^{\mu-j}_J(T)) = \dim_K \Ext^i_{T'}(K, H^{\mu-j}_{J'}(T'))$, and we are done.
\end{proof}

In general, the Lyubeznik numbers in mixed characteristic are not easy to compute.  However, if $(V,pV,K)$ is a complete DVR of unramified mixed characteristic $p>0$ and $R=V \llbracket x_1, \ldots, x_n \rrbracket $, and $r_1, \ldots, r_\ell \in R$ is a regular sequence, then $\LyuMixed{i}{j}{R/(r_1, \ldots, r_\ell)}=1$ when $i=j=n+1-\ell$, and vanishes otherwise (\emph{cf.} Properties \ref{FirstProp}).

Some vanishing properties of the Lyubeznik numbers in mixed characteristic are as follows (\emph{cf.} Properties \ref{FirstProp}).

\begin{properties} \label{MixedProp} Let $(R, \fm, K)$ be an $d$-dimensional local ring such that $K$ has characteristic $p>0$.  Then
\begin{enumerate}
\item $\LyuMixed{i}{j}{R} = 0$ if either $i>d$, $j>d,$ or $i> j+i$, and
\item $\LyuMixed{d}{d}{R} \neq 0.$
\end{enumerate}
\end{properties}

Given a surjection $\pi: S \surj \widehat{R}$ as in the definition of the Lyubeznik numbers in mixed characteristic, with $I = \ker(\pi)$ and $n = \dim(S)$, 
we have that $H^{n-j}_I(S) = 0$ for $j > \dim(S/I) = d$, and $\InjDim H^{n-j}_I(S) \leq \dim H^{n-j}_I(S)+1 \leq j+1$ by \cite{ZhouHigherDer}, proving Property \ref{MixedProp} (1) for $j>d$ or $i>j+i$.  However, vanishing for $j>d$ is more subtle, requiring a more detailed analysis.  
These results make possible the following definition (\emph{cf.} Definitions \ref{LyubeznikTable} and \ref{Highest}).  

\begin{definition}[Highest Lyubeznik number in mixed characteristic]
Let $(R, \fm, K)$ be an $d$-dimensional local ring such that $K$ has characteristic $p>0$.  Then $\LyuMixed{d}{d}{R}$ is called the \emph{highest Lyubeznik number of $R$ in mixed characteristic}.
\end{definition}

{\edit{
\begin{question}
What topological properties do the Lyubeznik numbers in mixed characteristic capture?
In particular, what information is encoded in the highest Lyubeznik number in mixed characteristic?
\end{question}

\begin{question}[\cite{NunezBWittLyuNumMixed}]
Let $R$ be a Cohen-Macaulay local ring of dimension $d$. Kawasaki showed that $\lambda_{d,d}(R)=1$ if $R$ contains a field \cite{KawasakiHighestLyubeznikNumber}.  If $R$ has mixed characteristic, is $\widetilde{\lambda}_{d,d}(R)=1$?
\end{question}
}}

\begin{definition}[Lyubeznik table in mixed characteristic]
Let $(R, \fm, K)$ be an $d$-dimensional local ring such that $K$ has characteristic $p>0$.
The \emph{Lyubeznik table of $R$ in mixed characteristic} is the $(d+1) \times (d+1)$ matrix 
%$\widetilde{\Lambda}(R) := \left( \LyuMixed{i}{j}{R} \right)_{0 \leq i, j \leq d}$
\edit{
\[ 
\widetilde{\Lambda}(R) := \left( \LyuMixed{i}{j}{R} \right)_{0 \leq i, j \leq d}  =   \begin{pmatrix}
\LyuMixed{0}{0}{R} &  \LyuMixed{0}{1}{R} & \cdots  & \LyuMixed{0}{d-2}{R} & \LyuMixed{0}{d-1}{R} & \LyuMixed{0}{d}{R} \\
\LyuMixed{1}{0}{R}   & \LyuMixed{1}{1}{R} & \cdots & \LyuMixed{1}{d-2}{R} & \LyuMixed{1}{d-1}{R} & \LyuMixed{1}{d}{R} \\
0 &  \LyuMixed{2}{1}{R}   & \cdots & \LyuMixed{2}{d-2}{R} & \LyuMixed{2}{d-1}{R} & \LyuMixed{2}{d}{R} \\
0 & 0 & \ddots & \vdots & \vdots & \vdots\\
\vdots & \vdots & \ddots &\LyuMixed{d-1}{d-2}{R}  & \LyuMixed{d-1}{d-1}{R} & \LyuMixed{d-1}{d}{R} \\
  0 & 0 & \cdots & 0 &  \LyuMixed{d}{d-1}{R}  & \LyuMixed{d}{d}{R} \\
\end{pmatrix}.
\]}\end{definition}
{\edit{
\noindent We note that all entries below the subdiagonal in each Lyubeznik table in mixed characteristic vanish; it is not known whether all entries below the diagonal must vanish.

\begin{question}
For every local ring $R$ with characteristic $p>0$ residue field $K$, is $\LyuMixed{i}{j}{R}=0$ for $i>j$?
\end{question}
}}

When $(R,\fm, K)$ is a local ring of equal characteristic $p>0$, both the Lyubeznik numbers in mixed characteristic and the (original) Lyubeznik numbers are defined.  When $R$ is Cohen Macaulay, or if $\dim(R)\leq 2$, these invariants coincide:  $\lambda_{i,j}(R) = \LyuMixed{i}{j}{R}$ for all $i,j\in \NN$.
However, the Lyubeznik numbers in mixed characteristic do not, in general, agree with the Lyubeznik numbers for rings of equal characteristic $p>0$, as the following example indicates.  

\begin{example}
Let $S$ be the ring and $I$ the ideal from Example \ref{LyuNum S-R differ}, and let $K = \mathbb{F}_2$.  If $R = \widehat{S/I}$, then  
\[
\Lambda(R) =\begin{pmatrix}
 0 &  0 & 1 & 0 \\
 0 &  0 & 0 & 0 \\
 0 &  0 & 0 & 1 \\
 0 &  0 & 0 & 1 \\
\end{pmatrix}, \ \text{ but } \ 
\widetilde{\Lambda}(R) = \begin{pmatrix}
 0 & 0 & 0 & 0 \\
 0 & 0 & 0 & 0 \\
 0 & 0 & 0 & 0 \\
 0 & 0 & 0 & 1 \\
\end{pmatrix}.
\]
\end{example}

This example is calculated by utilizing the second computation of \`Alvarez Montaner and Vahidi in Example \ref{LyuNum S-R differ}, as well as a result on Bockstein homomorphisms of local cohomology modules of Singh and Walther \cite{AMV, SinghWaltherBockstein}.

\section*{Acknowledgments}
We send many thanks to Josep \`Alvarez Montaner, Manuel Blickle,
Daniel J.\- Hern\'andez, Mel Hochster, Gennady Lyubeznik, Felipe P\'erez, and Uli Walther
for useful mathematical conversations and suggestions related to this work.
We also thank  Josep \`Alvarez Montaner
for computing several examples that appear in this survey.
The first author also thanks No\'e B\'arcenas Torres, Fernando Galaz Garc\'ia, and  M\'onica Moreno Rocha for organizing
the ``Taller de Vinculaci\'on Matem\'aticos Mexicanos J\'ovenes en el Mundo'' in Guanajuato, M\'exico, where this project was initiated,
as well as the National Council of Science and Technology of Mexico (CONACyT) for its support through Grant $210916.$
{\edit{ We thank the referee for a careful reading of this paper, and for several comments and suggestions that improved it.
}}
\bibliographystyle{skalpha}
\bibliography{CommonBib}
\end{document}